
\documentclass[a4paper,12pt]{amsart}

\usepackage[utf8]{inputenc}


\usepackage{amsmath,amssymb,amsxtra,comment,graphicx,psfrag}
\usepackage{bm,mathrsfs}
\usepackage{mathtools}
\usepackage{xspace}
\usepackage{stmaryrd}
\usepackage{color}
\usepackage{lineno}
\newcommand{\tred}{\textcolor{black}}

\newcommand{\beginsupplement}{%
        \setcounter{table}{0}
        \renewcommand{\thetable}{S\arabic{table}}%
        \setcounter{figure}{0}
        \renewcommand{\thefigure}{S\arabic{figure}}%
        \setcounter{equation}{0}
        \renewcommand{\theequation}{S\arabic{equation}}%
     }




\oddsidemargin 1mm
\evensidemargin 1mm
\textwidth 15.6truecm
\textheight 22.538truecm
\setlength{\topmargin}{-0.1cm}





\usepackage{enumerate,comment,graphicx,psfrag}

\usepackage{hhline}

\usepackage{bookmark}
\pdfstringdefDisableCommands{}

\usepackage{nameref}

\usepackage{verbatim}
\usepackage{tabls}

\usepackage{fancyhdr}
\usepackage{epsfig}
\usepackage{epsfig,subfigure,epstopdf}

\usepackage{pstricks,pst-plot,pst-func}
\usepackage{pspicture}
\usepackage{curves}

\usepackage{bbm}

\include{rgb}

\def\div{\text{\rm div}}

\def\nst2{\| _*} 
 
\def\a12{A_h ^{1/2} } 
\def\d{{\mathrm d}}
\def\tr|{|\! |\! |}

\def\R {{\mathbb R}}

\def\E{{\mathcal{E}}}

\def \a{\alpha }

\def\T_h{{{\mathcal T}_h}}

\def\<{{\langle }}
\def\>{{\rangle }}

\def\S{{\mathcal{S}}}


\newcommand{\mbph}{\bm \phi}

\DeclarePairedDelimiter{\norm}{\lVert}{\rVert}

\DeclareSymbolFont{matha}{OML}{txmi}{m}{it}
\DeclareMathSymbol{\varv}{\mathbf}{matha}{118}

\theoremstyle{definition}
\newtheorem{prop}{Proposition}[section]

\newtheorem{Theorem}{Theorem}[section]

\newtheorem{Remark}{Remark}[section]

\newcommand\restr[2]{{
  \left.\kern-\nulldelimiterspace 
  #1 
  \vphantom{\big|} 
  \right|_{#2} 
  }}

\NewDocumentCommand{\dgal}{sO{}m}{%
  \IfBooleanTF{#1}
    {\dgalext{#3}}
    {\dgalx[#2]{#3}}%
}

\NewDocumentCommand{\dgalext}{m}{%
  \sbox0{%
    \mathsurround=0pt 
    $\left\{\vphantom{#1}\right.\kern-\nulldelimiterspace$%
  }%
  \sbox2{\{}%
  \ifdim\ht0=\ht2
    \{\kern-.625\wd2 \{#1\}\kern-.625\wd2 \}%
  \else
    \left\{\kern-.7\wd0\left\{#1\right\}\kern-.7\wd0\right\}%
  \fi
}

\NewDocumentCommand{\dgalx}{om}{%
  \sbox0{\mathsurround=0pt$#1\{$}%
  \sbox2{\{}%
  \ifdim\ht0=\ht2
    \{\kern-.625\wd2 \{#2\}\kern-.625\wd2 \}%
  \else
    \mathopen{#1\{\kern-.7\wd0 #1\{}
    #2
    \mathclose{#1\}\kern-.7\wd0 #1\}}
  \fi
}


\renewcommand{\R}{{\mathbb{R}}}
\newcommand{\dx}{\ \mathrm{d}x}

\renewcommand{\T}{\mathbb{T}^d}


\begin{document}
\theoremstyle{plain}
\newtheorem{theorem}{Theorem}[section]
\newtheorem{lemma}{Lemma}[section]
\newtheorem{proposition}{Proposition}[section]
\newtheorem{corollary}{Corollary}[section]
\newtheorem{problem}{Problem}[section]

\theoremstyle{definition}
\newtheorem{definition}[theorem]{Definition}

\newtheorem{example}[theorem]{Example}

\newtheorem{remark}{Remark}[section]
\newtheorem{remarks}[remark]{Remarks}
\newtheorem{note}{Note}
\newtheorem{case}{Case}

\numberwithin{equation}{section}
\numberwithin{table}{section}
\numberwithin{figure}{section}




\title[Deep Ritz - Finite Element methods]
{Deep Ritz - Finite Element methods: Neural Network Methods trained with Finite Elements}

\author[Georgios Grekas]{Georgios Grekas}
\address{
Institute of Applied and Computational Mathematics, 
FORTH, 700$\,$13 Heraklion, Crete, Greece /
Aerospace Engineering and Mechanics, University of Minnesota, Minneapolis, USA 
/Computer, Electrical, Mathematical Sciences \& Engineering Division, King Abdullah University of Science and Technology (KAUST), Thuwal, Saudi Arabia} 
\email {\href{mailto:grekas.g@gmail.com}{grekas.g{\it @\,}gmail.com}}

\author[Charalambos G. Makridakis]{Charalambos G. Makridakis}
\address{
DMAM, University of Crete / 
Institute of Applied and Computational Mathematics, 
FORTH, 700$\,$13 Heraklion, Crete, Greece, and MPS, University of  Sussex, Brighton BN1 9QH, United Kingdom} 
\email {\href{mailto:C.G.Makridakis@iacm.forth.gr}{C.G.Makridakis{\it @\,}iacm.forth.gr}}

\date{\today}

\subjclass[2010]{65M15, 65M12.}

\begin{abstract}
While much attention of neural network methods is devoted to high-dimensional PDE problems, in this work we consider methods designed to work for elliptic problems on domains $\varOmega \subset \R ^d, $ $d=1,2,3$ in association with more standard finite elements. We suggest to connect finite elements and neural network approximations  through \emph{training}, i.e., using finite element spaces to compute the integrals appearing in the loss functionals. This approach, retains the simplicity of classical neural network methods for PDEs, uses well established finite element tools (and software) to compute the integrals involved and it gains in efficiency and accuracy. 
 We demonstrate that the proposed methods are stable and furthermore, we establish that the resulting approximations converge to the solutions of the PDE. 
 Numerical results indicating the efficiency and robustness of the proposed algorithms are presented.
\end{abstract}

\maketitle



\section{Introduction and method formulation}\label{Se:1}
While much attention in neural network methods is focused on high-dimensional PDE problems, this work explores methods designed for domains $\varOmega \subset \R^d$ with $d=1, 2, 3$, in conjunction with more traditional finite element techniques. We use finite element interpolation to train the continuous, and inherently non-computable, loss function of the original neural network method. Unlike standard neural network approaches for PDEs, which typically rely on collocation-type training (whether random or deterministic), our approach minimises over neural network spaces using specially designed loss functions that incorporate a finite element-based approximation of the continuous loss. This significantly reduces the number of back-propagation calls within the algorithm, resulting in stable and robust methods for approximating partial differential equations. In this article, we focus on linear elliptic problems, but the method can also be extended to other types of problems, linear or nonlinear. Additionally, these methods can be integrated with well-established techniques in the finite element community, such as adaptivity and mesh generation, to create hybrid algorithms that combine the strengths of both neural networks and finite element methods.

\subsection{The model problem}\label{SSe:1.1}
We start with an idealised formulation of  simple  boundary value problem   
\begin{equation}\label{ivp-AC}
\left \{
\begin{alignedat}{3}
&-\varDelta u  = f\quad &&\text{in}\,\,&&\varOmega \\[2pt]
&\quad  u =0 \quad &&\text{on}\,\,&&\partial\varOmega \\[2pt]
\end{alignedat}
\right .
\end{equation}
in a polygonal   domain
$\varOmega\subset\R^d, 1\leqslant d\leqslant 3$.
The natural   energy functional associated to this problem is 
\begin{equation}\label{en-func} 
\mathcal{E} (u)=\int_{\varOmega} \Big( \frac12 |\nabla u|^2  - f\, u \Big) \d x  \, ;
\end{equation}
then, the   solution of \eqref{ivp-AC} is the unique minimiser of the problem 
\begin{equation}\label{mm}
	\min  _ {v \in  H^1 _0 (\varOmega ) } \E (v)\, .
\end{equation}
The boundary conditions can be also imposed weakly within $\mathcal{E} .$ The deep Ritz method is based on the minimisation of the above functional on discrete neural network spaces appropriately discretised with collocation type methods to yield computable  approximations.  It will be useful to introduce an intermediate method involving 
only  minimisation of $ \E (v)$ on discrete neural network spaces without further discretisation of the functional. 
This approximation is in principle non computable, but it will be useful to introduce it in order to motivate our approach. 

\subsection{Discrete spaces generated by Neural Networks}\label{Se:1NN}
%
We follow the exposition of \cite{GGM_pinn_2023, Mishra_dispersive:2021} by considering functions 
$u_\theta$ defined through neural networks.
A deep neural network maps every point $x\in \varOmega$ to a number $u_\theta (x) \in \R$, through
\begin{equation}\label{C_L}
	u_\theta(x)= C_L  \circ \sigma  \circ C_{L-1} \cdots \circ\sigma \circ C_{1} (x) \quad \forall x\in \varOmega.
\end{equation}
%
 The process
 \begin{equation}
 \mathcal{C}_L:= C_L  \circ \sigma  \circ C_{L-1} \cdots \circ\sigma \circ C_{1} 
\end{equation}
is in principle a map $\mathcal{C}_L : \R ^m \to \R ^{m'} $; in our particular application, $m =d$ and $m'=1.$ 
The map $\mathcal{C}_L $ is a neural network with $L$ layers and activation function $\sigma.$ Notice that to 
define $u_\theta(x) $ for all $x\in \varOmega$ we use the same $\mathcal{C}_L ,$ thus $u_\theta(\cdot ) =\mathcal{C}_L  (\cdot ) .$
Any such map $\mathcal{C}_L$ is characterised by the intermediate (hidden) layers $C_k$, which are affine maps of the form 
\begin{equation}\label{C_k}
	C_k y = W_k y +b_k, \qquad \text{where }  W_k \in \R ^ {d_{k+1}\times d_k}, b_k \in \R ^ {d_{k+1}}.
\end{equation} 
Here the dimensions $d_k$ may vary with each layer $k$ and $\sigma (y)$ denotes the vector with the same number of components as $y$,
where $\sigma (y)_i= \sigma(y_i)\, .$ 
The index $\theta$ represents collectively all the parameters of the network $\mathcal{C}_L,$ namely $W_k , b_k, $ $k=1, \dots, L .$ 
The set of all networks $\mathcal{C}_L$ with a given structure (fixed $L, d_k,  k=1, \dotsc, L\,$) of the form \eqref{C_L}, \eqref{C_k}
is called $\mathcal{N}.$ The total dimension (total number of degrees of freedom) of  $\mathcal {N} ,$ is $\dim{\mathcal {N}}= \sum _{k=1} ^L d_{k+1} (d_k +1) \, .$ 
We now define the space of functions 
\begin{equation}
	V _{\mathcal{N}}= \{ u_\theta : \varOmega \to \R ,  \ \text{where }  u_\theta (x) = \mathcal{C}_L (x), \ \text{for some } \mathcal{C}_L\in  \mathcal{N}\, \} \, .
\end{equation}
It is important to observe that $V _{\mathcal{N}}$ is not a linear space. \tred{The formulation of the method and the convergence results do not depend on the specific activation functions or neural network architecture. For the analysis, it is assumed that the discrete spaces satisfy the approximation properties outlined in Section 2.1. Moreover, the convergence proof, as detailed in Theorem 2.1, becomes more intricate when the regularity of the discrete spaces is restricted to Lipschitz continuity, which is characteristic of the ReLU activation function.}
Given that  there is a one-to-one correspondence between parameters $\theta $ and functions 
\begin{equation}
	\theta \mapsto  u_\theta \in V _{\mathcal{N}}\, , 
\end{equation}
the space
\begin{equation}
	\Theta = \{ \theta \, : u_\theta \in V _{\mathcal{N}}\}.
\end{equation}  
 is a linear subspace of $ \R ^{\dim{\mathcal {N}}}.$

\subsection{Discrete minimisation  on  $V _{\mathcal{N}}$} We consider now the (theoretical) scheme:

\begin{definition}\label{abstract_mm_nn} Assume that the problem 
\begin{equation}\label{mm_nn:abstract}
	\min  _ {v \in  V _{\mathcal{N}} } \E (v)
\end{equation}
has a solution $v^\star \in V _{\mathcal{N}} .$ We call  $ v^\star \,  $ a deep-Ritz minimiser of $\E \, .$
\end{definition}
To yield a computable approximation, one observes that $\E \,  $ should be further discretised, since although derivatives of neural network functions are computable through back propagation, their integrals are not. Applying a deterministic or Monte-Carlo integration 
will yield a \emph{fully discrete} method. This is the key idea of the Deep Ritz method, \cite{e2017deep}.  The  discretisation of the functional
 $\E \,  $ is typically called in the literature \emph{Training,} since it parallels the training through data step of neural network algorithms, although for solving PDEs we do not have always available data to be used.  
 
 A subtle issue arises with boundary conditions because the non-local nature of neural network approximations makes it challenging to enforce constraints such as $  V _{\mathcal{N}} \subset H^1_0 (\varOmega).$ Typically, \eqref{mm_nn:abstract}
must be modified to include a term like
$$ \int _ {\partial \varOmega } \, |v|^2 \, \dx \, , $$
thereby weakly enforcing zero boundary conditions. One advantage of our approach is that it offers a clear solution to this issue, as discussed in Section \ref{bc}. However, for simplicity, we omit boundary losses in the presentation of the methods below; our method with boundary losses is described in Section \ref{bc}.


\subsection{Training approaches.}
\label{training_approaches}
 Computable discrete versions of the energy $\E(u_\theta)$ through training  
 can be achieved through different ways. We first describe the known methods based on quadrature/collocation and then we discuss the  method 
 suggested in the present work based on finite elements. 

 
 \subsubsection{Training through quadrature/collocation} One uses appropriate quadrature for integrals over $\varOmega.$ Such a quadrature requires 
 a set $K_h$ of discrete points $z\in K_h$  and corresponding nonnegative weights $w_z$
 such that 
 \begin{equation}
\label{quadrature}
\sum _{z\in K_h} \, w_z \, g(z) \approx \int _{\varOmega} \, g(z) \, \d x .
\end{equation}
  With the help of \eqref{quadrature} we define
%
 %
\begin{equation}
\label{E_h}
\mathcal{E}_{Q, h}( g )  = \sum _{z\in K_h} \, w_z \,\Big (  \frac 12 |\nabla g(z)| ^2  - f(z) g(z)\,\Big ) \, . 
\end{equation}

\begin{definition}
Assume that the problem 
\begin{equation}\label{ieE-minimize_NNQ}
	\min  _ {v \in  V _{\mathcal{N}} } \E _ {Q, h}(v)
\end{equation}
has a solution $v^\star \in V _{\mathcal{N}} .$ We call  $ v^\star \,  $ a Q-deep-Ritz minimiser of $ \E _ {Q, h} \, .$
\end{definition}
The deterministic quadrature \eqref{quadrature} naturally results in the discrete energy \eqref{E_h}, which necessitates evaluating $\frac 12 |\nabla g(z)| ^2  - f(z) g(z)$ at the quadrature points. This can be computationally accomplished using the back-propagation algorithm implemented in standard neural network software packages.


 \subsubsection{Training through Monte-Carlo quadrature/collocation} 
The  formulation in  \eqref{mm_nn:abstract} is quite flexible and allows probabilistic quadrature as well. In fact, we may  consider a collection $ X_1,X_2,\ldots$ of i.i.d.\ $\varOmega$-valued random variables, defined on an appropriate probability space 
representing a random choice of points in $\varOmega\, .$
Monte Carlo integration is formulated   in a suitable probabilistic framework: 
let $\omega $   be a fixed instance, and $X_i(\omega )\in \varOmega$ the corresponding values of the random variables. Consider the discrete energy, 
\begin{equation}\label{prob_E}
\mathcal{E}_{N, \omega }( g )  =  \frac 1N  \sum _{i=1 } ^N    \, \frac 12 |\nabla g(X_i(\omega)\, )| ^2  - f(X_i(\omega)\, ) g(X_i(\omega)\, )    
 \end{equation}
 The discrete minimisation problem for each instance is 
\begin{equation}\label{ieE-minimize_NNQ}
	\min  _ {v \in  V _{\mathcal{N}} } \E _{N, \omega }(v)\, . 
\end{equation}
We expect that for sufficiently large number of samples $N,$ this energy will approximate $\mathcal{E}(v).$  
Such methods are more appropriate in higher dimensions and are among (along with quasi-MC methods) the most popular training approaches for neural network discretisation of PDEs. In low dimensions, however, are computationally quite demanding. 

\subsubsection{Training through Finite Elements}
To introduce our method we shall need some standard finite element notation and terminology, cf. e.g., \cite{BrennerSFE}. Let $T_h$ be a shape regular triangulation of a polygonal domain $\varOmega$ with mesh size $h=h(x).$
For $K\in T_h$, $K$ is an element of the triangulation, and  $\mathbb{P}_q(K)$ denotes the set of polynomials of degree less or equal to $q$. 
We define the standard space of continuous  piecewise polynomial functions as  
\begin{equation}\label{Sh_def}
\tilde \S_h(\varOmega):=\left\{v\in C^0(\bar \varOmega)\,:\,\restr{v}{K}  \in \mathbb{P}_q(K), K \in T_h \right\}.
\end{equation}
We also consider the finite element space where zero boundary conditions are enforced
\begin{equation}\label{Sh_def}
 \S_h(\varOmega):=\left\{v\in C_0^0(\bar \varOmega)\,:\,\restr{v}{K}  \in \mathbb{P}_q(K), K \in T_h \right\}.
\end{equation}

Without loss of generality we consider Lagrangian elements:
  Let $\{ \Phi _z\} _ {z\in \mathcal {Z}}, $ be  the Lagrangian basis of $\S _{h},$
where $\mathcal {Z} $ denotes the set of degrees of freedom. 
The interpolant 
$I_{\S_h} :   C^0(\bar \varOmega)  \rightarrow \S_h(\varOmega)$ is defined as 
\begin{align}
I_{\S_h} u (x)= \sum_{z\in \mathcal {Z} }  u(z) \Phi_z(x), \text{ for } u\in  C^0(\bar \varOmega)\, . 
\end{align}
It is clear now that an   approximation of 
$\E $ is provided by 
\begin{equation}
\label{E_hFE}
\mathcal{E}_{\S _h}( g )  =\int_{\varOmega} \Big( \frac12 |\nabla I_{\S _h} ( g)|^2  -  I_{\S _h} ( f\, g) \Big) \d x  \, . 
\end{equation}
%
%
%
%
 %
%
The \emph{finite element-deep Ritz} method is then defined by minimising this discrete energy over the same neural network space 
$ V _{\mathcal{N}}:$

\begin{definition}
Assume that the problem 
\begin{equation}\label{ieE-minimize_NNQ}
	\min_ {v \in  V _{\mathcal{N}} }\mathcal{E}_{\S _h}(v)
\end{equation}
has a solution $v^\star \in V _{\mathcal{N}} .$ We call  $ v^\star \,  $ a $\S _h$-finite element-deep Ritz minimiser of $\mathcal{E}_{\S _h}\, .$
\end{definition}
Some clarifications are in order: The finite element space and the corresponding interpolant
 $I_{\S _h}$ are utilised solely to define the discrete energy. Specifically, for $g\in   V _{\mathcal{N}}, $ the function  $I_{\S _h} g \in \S _h .$ Moreover, both  $I_{\S _h} g $ and its gradient 
$\nabla I_{\S _h} g\, ,$ are piecewise polynomial functions involving only point values of $g$ at the Lagrangian degrees of freedom. Thus the corresponding integrals are readily computable using standard finite element tools, which highlights a key advantage of the method.  
The computation of $\int_{\varOmega}   |\nabla I_{\S _h} ( g)|^2 dx$ is straightforward and does not require the backpropagation algorithm. However, backpropagation is still employed to calculate the derivative of $\mathcal{E}_{\S _h} ,$
 which is necessary for the iterative approximation of minimisers.

\subsection{Boundary conditions.}\label {bc} Our approach permits the application \tred{ of finite element type   methods to weakly} impose homogeneous (or more general) boundary conditions. Using finite element training in the discrete functional, among other advantages, one is able to use the standard toolbox associated  to piecewise polynomials defined on triangulations. Therefore, e.g., Nitche's method to treat the boundary conditions 
can be made precise as in the standard finite elements, \tred{\cite{nitsche1971variationsprinzip}.} This was challenging  through other training methods, as it was not possible to include ``balanced" boundary terms in the functional.   \tred{ For interesting applications of   Nitsche's method  in the neural network setting with probabilistic training, see \cite{ming2021deep} and also \cite{georgoulis2023discrete} where a detailed discussion on the choice of the penalty parameter of the method is provided. }

Specifically, we seek minimizers of the discretized problem for $v \in V_{\mathcal{N}}$ satisfying the boundary conditions $v =g_0$ on $\partial \varOmega$ imposed weakly within the discrete functional.
Let $E_h^b$ denote the set of the boundary edges from the triangulation $T_h$. Then, Dirichlet boundary conditions are applied through 
Nitsche's method by adding to the minimisation problem the energy term 
\begin{align}\label{Nitsche_bcs}
  \sum_{e \in E_h^b}\frac{\alpha}{h_e} \int_e |v-g_0|^2 ds, \text{ for some } \alpha >0,
\end{align}
where $h_e$ is the diameter of the boundary edge $e $ of the decomposition. 
Notice that     $g_0=0$ in the  case of Dirichlet boundary conditions.
Thus when training with finite elements is considered the discrete energy takes the form
\begin{equation}
\label{E_hFE_nitsche}
\mathcal{E}_{\tilde \S _h, wb}( g )  =\int_{\varOmega} \Big( \frac12 |\nabla I_{\tilde \S _h} ( g)|^2  -  I_{\tilde \S _h} ( f\, g) \Big) \d x  + \sum_{e \in E_h^b}\frac{\alpha}{h_e} \int_e |I_{\tilde \S _h} ( g) -g_0|^2 ds
\end{equation}
for some $ \alpha >0 $ being a penalty parameter. \tred{In our case, the penalty parameter depends on the finite element spaces considered, and
the constants appearing in the required inverse inequalities. As \tred{is} well known in the finite element literature, the factor $\frac{1}{h_e} $ is crucial to balance the boundary discrete norms to  the $H^1-$ semi-norm 
at $\varOmega\, .$ 
For further details on the lower bounds for the penalty parameter $\alpha$, please refer to \cite[Chapter 37]{ern2021finite}}.

The corresponding minimisation problem is 
\begin{equation}\label{ieE-minimize_NN_wbFE}
	\min_ {v \in  V _{\mathcal{N}} }{E}_{\tilde \S _h, wb}(v)\, .
\end{equation}

%
%
%
\subsection{General Elliptic Problems}\label{gen_ell}
Adopting standard finite element quadrature approaches finite element training  can be applied to general elliptic problems of the form,
\begin{equation}\label{ellipticPDE}
\begin{alignedat}{3}
&L\, u = f\quad &&\text{in}\,\,&&\varOmega \\[2pt]
\end{alignedat}
\end{equation}
with boundary conditions $u =0\ \text{on}\  \partial\varOmega$. Here $ u : \varOmega \subset \mathbb{R}^d \rightarrow \mathbb{R} , \: \varOmega $ is an open, bounded set with smooth enough boundary, 
$f \in L^2 (\varOmega)  $ and $ L $ a self-adjoint elliptic operator of the form
\begin{equation}\label{EllipticOperator}
\begin{gathered}
Lu := - \sum_{1 \leq i,j \leq d} \big ( a_{ij} u_{x_i } \big )_{  x_j}  +cu \\
\textrm{where} \;\: \sum_{i,j} a_{ij}(x) \xi_i \xi_j \geq \theta | \xi|^2 \;\; \textrm{for any} \;\: x \in \varOmega \;\: \textrm{and any} \;\: \xi \in \mathbb{R}^n, \;\;\; \textrm{for some} \;\: \theta >0\, .
\end{gathered}
\end{equation}
Also, the coefficients are  smooth enough satisfying   $ a_{ij} = a_{ji} $  and $c\geq c_0> 0.$ 
 The methods and analysis herein can be extended to other boundary conditions with appropriate  modifications.  
We shall need the bilinear  form  $ B : H^1_0(\varOmega) \times H^1_0(\varOmega) \rightarrow \mathbb{R} ,$ defined by 
\begin{equation}\label{BilinearForm}
\begin{gathered}
B(u,v) = \int_\varOmega \Big (\, \sum_{i,j=1}^n a_{ij}u_{x_i}v_{x_j}  +cuv \: \Big ) \, \d x \, .
\end{gathered}
\end{equation}
The analog of the 
Dirichlet energy in this case is 
  \begin{equation}\label{en-func_genEll} 
\mathcal{E} (u)=  \frac12 B(u,u)  - \int_{\varOmega}   f\, u \, \d x  \, .
\end{equation}
Following \cite[Chapter IV, Sections 26, 28] {CiarletFE} we assume that finite element quadrature can be applied 
on  $ B : \S _h \times \S _h \rightarrow \mathbb{R} ,$ yielding a discrete bilinear form 
$ B _h : \S _h \times \S _h \rightarrow \mathbb{R} , $ for which the following two properties are satisfied
 \begin{equation}\label{genEll_quadr_ass} 
 \begin{split}
 & \tilde \alpha \| v_h \| ^ 2 _{H^1 (\varOmega )} \leq \, B_h (v_h, v_h )\, , \qquad \tilde  \alpha >0\, , \\
 &	\lim _{h \to 0 } \, \sup _{w_h\in \S _h} \frac {\big | B (I_{\S _h} ( g), w_h ) - B_h (I_{\S _h} ( g), w_h )\big |}
 {\| w_h \|  _{H^1 (\varOmega )} }=0\, ,
 \end{split}
\end{equation}
for sufficiently smooth $g\in H^1 _0 (\varOmega )\, $ (\tred{for more precise estimates depending on the regularity of $g,$ see e.g., 
the proof of   \cite[ Theorem 29.1] {CiarletFE}.)}
Then  the analog of the energy trained with finite elements 
is 
\begin{equation}
\label{E_hFE_genEll}
\mathcal{E}_{\S _h}( g )  = \frac12 B_h (I_{\S _h} ( g),I_{\S _h} ( g) ) -\int_{\varOmega}    I_{\S _h} ( f\, g)  \d x  \, . 
\end{equation}
In \cite[Chapter IV] {CiarletFE} a detailed finite element analysis with particular examples of quadrature rules satisfying these properties is provided. The deep Ritz finite element method 
for general elliptic operators hinges   on the loss defined by \eqref{E_hFE_genEll}. \\

\subsection{Contribution and results}
\label{sec:G_convergence}

In the field of machine learning for models characterised by partial differential equations, there is currently significant activity, 
including the development of new methods to solve differential equations, operator learning, and advances in uncertainty quantification and statistical functional inference. Despite the recent advancements in these areas, fundamental mathematical and algorithmic understanding is still evolving.
Several neural network approaches have been introduced over the years, including Deep-Ritz methods,  Physics Informed Neural Networks, Variational PINNs, among others, see e.g.,  \cite{e2017deep}, \cite{Karniadakis:pinn:orig:2019},  \cite{kharazmi2019variational}.  Residual based methods were considered in   \cite{Lagaris_1998}, \cite{Berg_2018},  \cite{Raissi_2018},  \cite{SSpiliopoulos:2018} and their references.  Other neural network methods for differential equations and related problems include, for example, \cite{kevr_1992discrete}, \cite{Xu}, 
\cite{chen2023solving},  \cite{georgoulis2023discrete}, 
\cite{Grohs:space_time:2023}. 
These methods are applied to diverse complex  physical and engineering problems;  for a broader perspective  see e.g., \cite{karniadakis_kevr_2021physics}.   
\subsubsection*{\it Finite Element Training.}
Previously known approaches were based on quadrature-collocation methods  and to Monte-Carlo-Collocation approaches, see the references above and Section 
\ref{training_approaches}. 
\tred{Instead, our} approach retains the simplicity of classical neural network methods for PDEs, uses well established finite element tools (and software) to compute the integrals involved and it gains in efficiency and accuracy. As mentioned, since finite element meshes are required, the applicability of this method 
is limited to low dimensional problems, or to problems where related finite element spaces can be constructed.

\subsubsection*{\it Stability and Convergence.} 
The stability framework introduced in \cite{GGM_pinn_2023} proves to be effective in the current context. We demonstrate that the proposed methods are stable, in the sense made precise in  Proposition \ref{Prop:EquicoercivityofE(2)}. Furthermore, we establish that the resulting approximations converge to the solutions of the PDE, provided that the neural network spaces are chosen to meet specific approximability criteria. The necessary approximation capacity of these neural network spaces aligns with existing results (see Remark \ref{Rmk:NNapproximation}). Our assumptions regarding the PDE solution involve only minimal regularity requirements.

As in \cite{GGM_pinn_2023}, the liminf-limsup framework of De Giorgi—see Section 2.3.4 of \cite{DeGiorgi_sel_papers:2013} and, for example, \cite{braides2002gamma}—used in the 
$\Gamma-$convergence of functionals in nonlinear PDEs and energy minimisation, is particularly valuable. The inclusion of the finite element interpolant in the discrete functionals introduces certain technical challenges, which are addressed in the following section. Importantly, no additional assumptions on the discrete minimisers are necessary to ensure convergence.

We want to highlight that our stability and convergence analysis has practical significance. It assists in determining which energies (or losses) lead to well-behaved (stable) algorithms. This analysis is particularly insightful, as not all seemingly reasonable energies result in stable algorithms, as demonstrated in \cite{GGM_pinn_2023}.


%

\subsubsection*{\it Numerical Performance.} 
In Section \ref{NR}, we present some preliminary numerical results that suggest the proposed method indeed produces accurate and robust algorithms. Integrating finite elements into the deep-Ritz method is particularly simple to implement and offers the expected flexibility in selecting polynomial spaces and finite element quadrature. The performance of the \emph{finite element-deep Ritz method} favourably compares to other training methods in terms of both accuracy and computational execution time.


\subsubsection*{\it Related Literature.}
Combining finite elements and neural networks were considered before mainly in the framework of Variational PINNs \cite {kharazmi2019variational}, 
in the works 
\cite{B_Canuto_P_Vpinn_quadrature_2022} and \cite{Badia_2024}, see also, \cite{HybridNN_Fang2022}, \cite{meethal2023finite}. The interesting approach taken is related to how quadrature rules and  different finite element spaces influence the asymptotic behaviour of  Variational PINNs. In these methods as well the finite element interpolant of the neural network functions is used in the definition of loss. Such methods 
when connected to finite elements introduce a Petrov-Galerkin framework and their stability  relies  on  inf-sup conditions. In addition, detailed numerical results of \cite{B_Canuto_P_Vpinn_quadrature_2022} and \cite{Badia_2024} indicate that upon appropriate parameter tuning such methods are capable to produce very accurate approximations. In \cite{B_Canuto_P_Vpinn_quadrature_2022}  a detailed analysis is presented including error estimates.

Previous works analyzing methods based on  neural network spaces for PDEs include \cite{SSpiliopoulos:2018}, \cite{Mishra_dispersive:2021}, \cite{Karniadakis:pinn:conv:2019}, \cite{shin2023error}, \cite{Mishra:pinn:inv:2022}, \cite{hong2022priori}, and \cite{Mishra_gen_err_pinn:2023}. 
The results in \cite{Mishra_dispersive:2021}, \cite{Mishra:pinn:inv:2022}, and \cite{Mishra_gen_err_pinn:2023} were based on estimates where the bounds depend on the discrete minimisers and their derivatives.  The findings in \cite{hong2022priori}, which involve deterministic training, are related in that they apply to neural network spaces where high-order derivatives are uniformly bounded in suitable norms by design. 
 In  \cite{muller2020deep} $\Gamma$-convergence  was used in the analysis of  deep Ritz methods without training. In the recent work \cite{loulakis2023new}, the $\liminf - \limsup \,$ framework was used in general machine learning algorithms with probabilistic training to derive convergence results for global and local discrete minimisers.  As mentioned, the stability framework and the general plan of convergence based on the the $\liminf - \limsup \,$ framework was first suggested in  \cite{GGM_pinn_2023} where PINN methods were considered  for elliptic and parabolic problems. For recent applications to computational methods where the discrete energies are rather involved,  see \cite{bartels2017bilayer},   \cite{grekas2022approximations}. 
%

\section{Convergence of the discrete minimisers}
\label{sec:G_convergence}

In this section we establish the stability of the algorithm and the convergence of the discrete minimisers to the exact solution of the elliptic problem. 

\subsection{Setting}
Following, \cite{GGM_pinn_2023}, we adopt a key notion of stability motivated by   Equi-Coercivity 
in the $\Gamma-$convergence. This notion   eventually  drives compactness and the convergence of minimisers 
of the approximate functionals. As in  \cite{GGM_pinn_2023} we denote   by $\mathcal{E}_\ell ,$ the approximate functionals where $\ell$ stands for a discretisation parameter. 
 $\mathcal{E}_\ell $ are called stable if the following  two 
key properties hold:  
\begin{enumerate}
	\item [{[S1]}]  If energies  $\mathcal{E}_\ell$ are uniformly bounded
$$\mathcal{E}_\ell [u_\ell] \leq C,
$$  
then  there exists a constant $C_1>0$ and $\ell-$dependent norms (or semi-norms) $V_\ell$ such that 
\begin{align}
&\|u_\ell\|_{V_\ell} \le C_1 . \label{coer:dg_seminorm}
\end{align}
\item [{[S2]}]  Uniformly bounded sequences in $\|u_\ell \| _{V_\ell}$ have convergent subsequences in $H,$
\end{enumerate}
where $H$ is a normed space (typically a Sobolev space) which depends on the form of the discrete energy considered. 
  Additionally, property [S2] implies that even though the norms (or semi-norms)  $\|\cdot \| _{V_\ell}$ vary with $\ell$, they should be designed such that it is possible to extract convergent subsequences in a weaker topology (induced by the space $H$) from uniformly bounded sequences in these norms. \\
  
We shall use standard notation for \tred{Sobolev} spaces
$W^{s, p} (\mathcal {O} ) ,$ having weak derivatives up to order $s$ on $L^p  (\mathcal {O} )$ defined on a set  $\mathcal {O} .$ The corresponding norm is denoted by $\| \cdot \| _ {W^{s, p} (\mathcal {O} )}$ and the seminorm by $| \cdot | _ {W^{s, p} (\mathcal {O} )}.$ The norm of $L^2  (\varOmega )$ will be denoted simply by $\| \cdot \|\, .$\\

Next, assuming we choose the networks appropriately, increasing their complexity should allow us to approximate any $w$ in $H^1$. To achieve this, we select a sequence of spaces $V _{\mathcal{N}}  $ as follows: for each 
  $\ell \in \mathbb N$ we correspond a DNN space  $ V _{\mathcal{N}}  ,$
  which is denoted by  $V_\ell $ with the following property: For each $w\in H_0^1(\varOmega)$ 
  there exists a $w_\ell \in V_\ell$ such that,
    \begin{equation}\label{w_ell_7}\begin{split} 
 \|w_{\ell}-w\|_{H^1(\varOmega)}  \leq \  \beta _\ell \, (w), \qquad 
 \text{and } \ \beta _\ell \, (w) \to 0,  \  \ \ell\to \infty\, .	\end{split}
\end{equation}
If in addition, $w \in W^{m, p} (\varOmega )  $ is in higher order Sobolev space and $1\leq p \leq \infty$
we assume that for $m\geq s+1$  
  \begin{equation}\label{w_ell_7_hSm}\begin{split} 
 \|w_{\ell}-w\|_{W^{s, p} (\varOmega)}  \leq \ \tilde  \beta ^{[m, s,  p]} _\ell \,  | w | _ {W^{m, p} (\varOmega)} , \qquad 
 \text{and } \ \tilde \beta _\ell ^{[m, s,  p]} \, \to 0,  \  \ \ell\to \infty\, .	\end{split} \end{equation}
We do not need specific rates for $ \tilde \beta _\ell ^{[m, s,  p]}\,   ,$
only that the right-hand side of \eqref{w_ell_7_hSm} explicitly depends on the Sobolev norms of $w .$  
This assumption is reasonable given the available approximation results for neural network spaces; see, for example \cite{Xu}, \cite{Dahmen_Grohs_DeVore:specialissueDNN:2022,Schwab_DNN_constr_approx:2022, Schwab_DNN_highD_analystic:2023, Mishra:appr:rough:2022, Grohs_Petersen_Review:2023}, 
and their references. 

\begin{remark}\label{Rmk:NNapproximation}
Despite advances in the approximation theory of neural networks, the current results do not offer sufficient guidance on the specific architectures needed to achieve certain bounds with specific rates. Given that the approximation properties are a significant but separate issue, we have opted to impose minimal assumptions necessary to prove convergence. 
These assumptions can be relaxed by requiring that \eqref{w_ell_7} and \eqref{w_ell_7_hSm} hold specifically for $w = u$, where $u$ is the exact solution of the problem; see Remark \ref{adapt_conv}. 
%
\end{remark}

Furthermore, for each such $\ell$ we associate a finite element space $\S _{h(\ell)}, $ with maximum diameter $h(\ell)$ such that 
$h(\ell)  \to 0,  \  \ \ell\to \infty\, .$ Then
we shall use the compact notation for the minimisation problem
%
\begin{equation}\label{E_ell}
	\min_ {v \in  V _{\ell }} \mathcal{E}_{\ell}(v), \qquad \text{where } \mathcal{E}_{\ell}(v) := \mathcal{E}_{\S _{h(\ell)} }(v)\, .
\end{equation}
The corresponding $\S _{h(\ell)} $-finite element-deep Ritz minimisers are denoted by $u_\ell. $ 
%
\subsection{Stability and Convergence} We start with the stability of the method in the sense made precise below.

\begin{proposition}\label{Prop:EquicoercivityofE(2)} The functional $ \mathcal{E} _\ell $ defined in \eqref{E_ell} is stable with respect to the $ H^1 $-norm, in the following sense: Let $ (v_\ell) $ be a sequence of functions in $ V_\ell $ such that for a constant $ C>0 $ independent of $ \ell $, it holds that
\begin{equation}\label{EquicoercivityofEdelta1(2)}
\mathcal{E}_\ell (v_\ell) \leq C.
\end{equation} 
Then there exists a constant $ C_1>0 $ such that
\begin{equation}\label{EquicoercivityofEdelta2(2)}
\| I_{\S _{h(\ell)} }v_\ell \|_{H^1(\varOmega)} \leq C_1\, .
\end{equation}
\end{proposition}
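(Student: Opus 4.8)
The plan is to show that boundedness of $\mathcal{E}_\ell(v_\ell)$ forces an $H^1$-bound on the interpolant $I_{\S_{h(\ell)}} v_\ell$, by rewriting the energy so that the gradient term controls the $H^1$-seminorm and the linear term is absorbed via Cauchy–Schwarz, Poincaré, and Young's inequality. Write $w_\ell := I_{\S_{h(\ell)}} v_\ell \in \S_{h(\ell)} \subset H^1_0(\varOmega)$. Then
\begin{equation}\label{eq:plan-energy}
\mathcal{E}_\ell(v_\ell) = \frac12 \|\nabla w_\ell\|^2 - \int_\varOmega I_{\S_{h(\ell)}}(f v_\ell)\,\d x \leq C.
\end{equation}
The only subtlety is the second term: it is \emph{not} $\int_\varOmega f w_\ell\,\d x$, because $I_{\S_{h(\ell)}}(f v_\ell) \ne (I_{\S_{h(\ell)}} f)(I_{\S_{h(\ell)}} v_\ell)$ in general, and $v_\ell$ is a neural network function, not a finite element function. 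So the main work is to estimate $\bigl|\int_\varOmega I_{\S_{h(\ell)}}(f v_\ell)\,\d x\bigr|$ in terms of $\|w_\ell\|_{H^1}$ (plus controlled lower-order contributions).

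First I would handle the linear term. Using the nodal interpolant, $\int_\varOmega I_{\S_{h(\ell)}}(f v_\ell)\,\d x = \sum_{z\in\mathcal{Z}} f(z) v_\ell(z)\, m_z$ with $m_z = \int_\varOmega \Phi_z\,\d x \ge 0$; note $v_\ell(z) = w_\ell(z)$ since $w_\ell$ is the nodal interpolant of $v_\ell$. I would then compare this quadrature-type sum to $\int_\varOmega f w_\ell\,\d x$: write $\int_\varOmega I_{\S_{h(\ell)}}(f v_\ell)\,\d x = \int_\varOmega I_{\S_{h(\ell)}}(f w_\ell)\,\d x$ and use a standard interpolation error estimate, $\|I_{\S_{h(\ell)}}(f w_\ell) - f w_\ell\|_{L^1(\varOmega)} \le C h(\ell)^{?}\,|f w_\ell|_{W^{?,1}}$, or more robustly just the $L^2$-stability of the nodal quadrature functional on $\S_{h(\ell)}$ together with $\|f\|_{L^\infty}$ (or $\|f\|_{L^2}$ with an $L^\infty$ inverse-type bound is dangerous, so I would assume $f$ bounded, or handle it via $\|I_{\S_{h(\ell)}} f\|$). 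The cleanest route: bound $\bigl|\int_\varOmega I_{\S_{h(\ell)}}(f v_\ell)\,\d x\bigr| \le \|I_{\S_{h(\ell)}} f\|_{L^2(\varOmega)}\,\|w_\ell\|_{L^2(\varOmega)} + (\text{interpolation remainder})$, then use $\|I_{\S_{h(\ell)}} f\|_{L^2} \le C\|f\|$ (interpolation stability, for $f$ smooth enough as assumed) and Poincaré $\|w_\ell\|_{L^2(\varOmega)} \le C_P \|\nabla w_\ell\|$ since $w_\ell \in H^1_0$.

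Combining, \eqref{eq:plan-energy} gives $\frac12\|\nabla w_\ell\|^2 \le C + C\|f\|\,C_P\|\nabla w_\ell\| + (\text{remainder that is } o(1)\text{ or controlled by } h(\ell))$. Young's inequality, $C\|f\|C_P\|\nabla w_\ell\| \le \frac14\|\nabla w_\ell\|^2 + C'\|f\|^2$, absorbs the middle term, yielding $\|\nabla w_\ell\| \le C_1'$; Poincaré then upgrades this to $\|w_\ell\|_{H^1(\varOmega)} \le C_1$, which is \eqref{EquicoercivityofEdelta2(2)}. The main obstacle is the linear term: controlling $\int_\varOmega I_{\S_{h(\ell)}}(f v_\ell)\,\d x$ uniformly in $\ell$ without an a priori bound on $v_\ell$ itself. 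The resolution uses that $I_{\S_{h(\ell)}}(f v_\ell)$ depends on $v_\ell$ only through its nodal values, which are exactly the nodal values of $w_\ell \in H^1_0(\varOmega)$ — so all estimates can be phrased entirely in terms of $w_\ell$, for which Poincaré is available. A secondary technical point is the regularity needed on $f$ for the interpolation remainder to be harmless; under the smoothness hypotheses on the data already assumed in the paper (or simply $f\in L^\infty$, or $f\in C^0$), this is routine, and for the low-regularity $f\in L^2$ case one would instead keep $\int_\varOmega I_{\S_{h(\ell)}}(f w_\ell)\,\d x$ intact and estimate it by $\|f\| \,\|I_{\S_{h(\ell)}} w_\ell\|\le \|f\|\,\|w_\ell\|$ directly, avoiding the remainder altogether.
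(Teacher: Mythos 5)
Your proposal is correct and follows essentially the same route as the paper: the gradient term in $\mathcal{E}_\ell$ controls $\|\nabla I_{\S_{h(\ell)}}v_\ell\|_{L^2}^2$, the linear term is bounded by $\|f\|_{L^\infty}\,|\varOmega|^{1/2}\,\|I_{\S_{h(\ell)}}v_\ell\|_{L^2}$ using that the interpolant depends only on nodal values, and the Poincar\'e inequality (with Young's inequality to absorb) closes the estimate. The paper's proof is just a terser version of the same argument.
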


\begin{proof}
Assuming  $ \mathcal{E}_\ell(v) \leq C $ for some $ C >0 $, then $ \|I_{\S _{h(\ell)} } v||_{H^1(\varOmega)} \leq \tilde{C} $ for some $ \tilde{C}>0 . $ In fact, by the definition of the functional we have   
\begin{equation} \begin{gathered}
\frac 1 2  || \nabla I_{\S _{h(\ell)} } v||_{L^2(\varOmega)}^2 \leq \int_\varOmega I_{\S _{h(\ell)} } ( f v) \dx  +C \leq \| f \| _{L^\infty }  | \varOmega | ^{1/2}\, ||   I_{\S _{h(\ell)} } v||_{L^2(\varOmega)} + C\, . 
\end{gathered}
\end{equation}
The proof is completed by applying the   Poincar\' e  inequality.  
\end{proof}

In the following theorem,  we utilise the  $\liminf$-$\limsup$ framework of $ \Gamma $-convergence, to prove that the sequence $ (I_{\S _{h(\ell)} }  u_\ell) $ where $u_\ell$ are minimisers  of the  functionals $\mathcal{E}_{\ell} $ converges   to the (unique) minimiser of the continuous   functional.

\begin{theorem}[Convergence of the discrete minimisers]\label{Thrm:Gamma_funct} Let $ \mathcal{E},\; \mathcal{E}_{\ell} $ be the energy functionals defined in \eqref{en-func} and 
\eqref{E_ell}
 respectively, and $f\in C^0(\bar \varOmega).$
 Let $ (u_\ell) , $ $u_\ell \in V_{\ell}, $ be a sequence of minimisers of $ \mathcal{E}_{\ell} $ and $$\widehat  u_\ell: =I_{\S _{h(\ell)} } u_\ell \, .$$
Then,  if the finite element spaces are chosen such that $ \underline {h}_{E, \ell} ^ {-1/2}\,  (\tilde  \beta ^{[2,0,  \infty]} _\ell) ^{1-2\epsilon} \leq C,$ where $\underline {h}_{E, \ell} = \min_ {e\in E_{h(\ell)}} h_e,$ we have 
\begin{equation}\label{ConvOfDiscrMinE}
\widehat  u_\ell \rightarrow u ,  \;\; \; \textrm{in} \;\: L^2 (\varOmega), 
\quad 
\widehat   u_\ell \rightharpoonup u  \, ,  \;\; \; \textrm{in} \;\: H^1 (\varOmega),
\qquad \ell \to \infty\, .
\end{equation}
where $u $ is the exact solution of the  problem.
\end{theorem}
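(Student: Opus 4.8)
The plan is to run De Giorgi's $\liminf$–$\limsup$ argument for minimisers, using Proposition~\ref{Prop:EquicoercivityofE(2)} as the equi-coercivity ingredient. Concretely I would establish three facts and then sandwich. \emph{(a) Compactness}: if $\mathcal{E}_\ell(v_\ell)\le C$, then by Proposition~\ref{Prop:EquicoercivityofE(2)} $\|\widehat v_\ell\|_{H^1(\varOmega)}\le C_1$ with $\widehat v_\ell:=I_{\S _{h(\ell)}}v_\ell$; since each $\widehat v_\ell\in \S _{h(\ell)}(\varOmega)\subset H^1_0(\varOmega)$, the Rellich--Kondrachov theorem together with weak closedness of $H^1_0$ gives a subsequence with $\widehat v_\ell\to v_\star$ in $L^2(\varOmega)$, $\widehat v_\ell\rightharpoonup v_\star$ in $H^1(\varOmega)$, $v_\star\in H^1_0(\varOmega)$. \emph{(b) A $\liminf$ inequality} along such converging sequences. \emph{(c) A recovery sequence} for the exact solution $u$, which is the unique minimiser of $\mathcal{E}$ over $H^1_0(\varOmega)$ by strict convexity.

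For (b): let $v_\ell\in V_\ell$ with $\widehat v_\ell\rightharpoonup v$ in $H^1(\varOmega)$, hence $\widehat v_\ell\to v$ in $L^2(\varOmega)$. Write $\mathcal{E}_\ell(v_\ell)=\tfrac12\|\nabla\widehat v_\ell\|^2-\int_\varOmega I_{\S _{h(\ell)}}(f v_\ell)\dx$. The Dirichlet term $w\mapsto\tfrac12\|\nabla w\|^2$ is convex and strongly continuous on $H^1$, hence weakly lower semicontinuous, so $\liminf_\ell\tfrac12\|\nabla\widehat v_\ell\|^2\ge\tfrac12\|\nabla v\|^2$. For the linear term, the key observation is that at the interior Lagrange nodes $z$ one has $\widehat v_\ell(z)=v_\ell(z)$, so the interpolants coincide, $I_{\S _{h(\ell)}}(f v_\ell)=I_{\S _{h(\ell)}}(f\,\widehat v_\ell)$; then an elementwise estimate using the uniform continuity of $f$ (with modulus $\omega_f$), the fact that $I_{\S _{h(\ell)}}\widehat v_\ell|_K=\widehat v_\ell|_K$, the inverse inequality $\|p\|_{L^\infty(K)}\le C h_K^{-d/2}\|p\|_{L^2(K)}$ on $\mathbb P_q(K)$, and $\|\widehat v_\ell\|_{H^1}\le C_1$ yields $\big|\int_\varOmega I_{\S _{h(\ell)}}(f\widehat v_\ell)\dx-\int_\varOmega f\widehat v_\ell\dx\big|\le C\,\omega_f(h(\ell))\,\|\widehat v_\ell\|_{L^2}\to0$; combined with $\int_\varOmega f\widehat v_\ell\dx\to\int_\varOmega f v\dx$ (since $\widehat v_\ell\to v$ in $L^2$ and $f\in L^2(\varOmega)$), we get $\int_\varOmega I_{\S _{h(\ell)}}(f v_\ell)\dx\to\int_\varOmega f v\dx$ and therefore $\liminf_\ell\mathcal{E}_\ell(v_\ell)\ge\mathcal{E}(v)$.

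For (c): with $u$ the exact solution, choose $w_\ell\in V_\ell$ from \eqref{w_ell_7}--\eqref{w_ell_7_hSm} with $\|w_\ell-u\|_{H^1(\varOmega)}\le\beta_\ell(u)\to0$ and $\|w_\ell-u\|_{L^\infty(\varOmega)}\le\tilde\beta^{[2,0,\infty]}_\ell\,|u|_{W^{2,\infty}(\varOmega)}$ — the requisite Sobolev regularity of $u$ is the only extra input, and where it fails one replaces $u$ by smooth approximations $u_\delta$ and diagonalises in $\delta=\delta(\ell)$, or invokes the relaxed hypothesis of Remark~\ref{adapt_conv}. Splitting $\nabla I_{\S _{h(\ell)}}w_\ell=\nabla I_{\S _{h(\ell)}}u+\nabla I_{\S _{h(\ell)}}(w_\ell-u)$, the first term converges to $\nabla u$ in $L^2$ by standard finite element interpolation theory; for the second, the elementwise inverse inequality reduces matters to controlling $\sum_K h_K^{-2}\|I_{\S _{h(\ell)}}(w_\ell-u)\|_{L^2(K)}^2$, and one uses $\|I_{\S _{h(\ell)}}(w_\ell-u)\|_{L^2(K)}^2\le C\|w_\ell-u\|_{L^\infty(K)}\big(h_K\,|w_\ell-u|_{W^{1,1}(K)}+\|w_\ell-u\|_{L^1(K)}\big)$ together with a first-order interpolation error in $L^1$. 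Summing, bounding each $h_K$ below by $\underline{h}_{E,\ell}$, and inserting $\|w_\ell-u\|_{L^\infty}\le\tilde\beta^{[2,0,\infty]}_\ell|u|_{W^{2,\infty}}$ and $\|w_\ell-u\|_{H^1}=\beta_\ell(u)$, the hypothesis $\underline{h}_{E,\ell}^{-1/2}(\tilde\beta^{[2,0,\infty]}_\ell)^{1-2\epsilon}\le C$ forces $\|\nabla I_{\S _{h(\ell)}}(w_\ell-u)\|\to0$. Since the linear term of $\mathcal{E}_\ell(w_\ell)$ converges to $\int_\varOmega f u\dx$ exactly as in (b), we obtain $\widehat w_\ell\to u$ in $H^1(\varOmega)$ and $\mathcal{E}_\ell(w_\ell)\to\mathcal{E}(u)$.

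Finally, combine: since $u_\ell$ minimises $\mathcal{E}_\ell$, $\mathcal{E}_\ell(u_\ell)\le\mathcal{E}_\ell(w_\ell)=\mathcal{E}(u)+o(1)$, so $\mathcal{E}_\ell(u_\ell)$ is uniformly bounded; by (a) a subsequence of $(\widehat u_\ell)$ converges to some $u_\star\in H^1_0(\varOmega)$, strongly in $L^2$ and weakly in $H^1$. By (b), $\mathcal{E}(u_\star)\le\liminf_\ell\mathcal{E}_\ell(u_\ell)\le\limsup_\ell\mathcal{E}_\ell(u_\ell)\le\limsup_\ell\mathcal{E}_\ell(w_\ell)=\mathcal{E}(u)\le\mathcal{E}(u_\star)$, the last step by minimality of $u$ over $H^1_0(\varOmega)\ni u_\star$; hence $\mathcal{E}(u_\star)=\mathcal{E}(u)$ and, by uniqueness of the minimiser, $u_\star=u$. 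Since every subsequence of $(\widehat u_\ell)$ admits a further subsequence with this same limit $u$, the whole sequence converges, which is \eqref{ConvOfDiscrMinE}. I expect the main obstacle to be step (c), precisely the bound on $\|\nabla I_{\S _{h(\ell)}}(w_\ell-u)\|$: since $I_{\S _h}$ does not commute with the gradient, a naive use of inverse inequalities loses a full power $h_{\min}^{-1}$, so one must simultaneously exploit the $L^\infty$-smallness \emph{and} the $H^1$-boundedness of $w_\ell-u$ — which is exactly what the relation between $\underline{h}_{E,\ell}$ and $\tilde\beta^{[2,0,\infty]}_\ell$ encodes — and the fact that only low-order ($L^\infty$, resp.\ $W^{2,\infty}$) control on the neural-network approximation is assumed (and even less for ReLU networks) is what makes this step delicate.
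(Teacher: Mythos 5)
Your overall strategy --- equi-coercivity via Proposition \ref{Prop:EquicoercivityofE(2)}, a $\liminf$ inequality, a recovery sequence for $u$, and the final sandwich with uniqueness of the minimiser --- is exactly the paper's, and your steps (a), (b) and the concluding argument match the paper's proof essentially line by line (including the observation that the interpolated linear term can be compared to $\int_\varOmega f\,\widehat v_\ell\,\d x$ via the modulus of continuity of $f$ and a scaling argument, and the use of a recovery sequence only for $w=u$ as in Remark \ref{adapt_conv}).

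The genuine gap is in step (c). First, the specific estimate you propose, $\|I_{\S_{h(\ell)}}(w_\ell-u)\|_{L^2(K)}^2\le C\|w_\ell-u\|_{L^\infty(K)}\bigl(h_K|w_\ell-u|_{W^{1,1}(K)}+\|w_\ell-u\|_{L^1(K)}\bigr)$, is false for $d\ge 2$: after scaling to the reference element it amounts to the embedding $W^{1,1}(\hat K)\hookrightarrow L^\infty(\hat K)$, which holds only in one dimension. Second, and more structurally, your decomposition $\nabla I_{\S_{h(\ell)}}w_\ell=\nabla I_{\S_{h(\ell)}}u+\nabla I_{\S_{h(\ell)}}(w_\ell-u)$ forces you to pay the inverse-inequality price $h_K^{-1}$ on \emph{every} element, whereas the hypothesis $\underline h_{E,\ell}^{-1/2}(\tilde\beta^{[2,0,\infty]}_\ell)^{1-2\epsilon}\le C$ is calibrated for a loss of only $\underline h_{E,\ell}^{-1/2}$ incurred on the boundary layer alone. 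The paper avoids this by a different splitting: it compares $\nabla w_{\ell,\delta}$ with $\nabla I_{\tilde\S_{h(\ell)}}w_{\ell,\delta}$ using the standard interpolation error bound $Ch(\ell)|w_{\ell,\delta}|_{H^2(\varOmega)}$ (which is why the case $V_\ell\subset H^2(\varOmega)$ is singled out, with a Cl\'ement-interpolant substitute for ReLU networks), and then observes that $I_{\tilde\S_{h(\ell)}}w_{\ell,\delta}-I_{\S_{h(\ell)}}w_{\ell,\delta}$ vanishes at all interior nodes, so its gradient is supported only on elements touching $\partial\varOmega$; summing over that layer produces exactly the factor $\underline h_{E,\ell}^{-1/2}\,|w_{\ell,\delta}-w_\delta|_{L^\infty(\partial\varOmega)}$ that the hypothesis controls. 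Without localising the inverse inequality to the boundary layer and without an $H^2$ (or Cl\'ement) bound for the interior interpolation error, your version of step (c) does not close; you have correctly diagnosed where the difficulty lies, but the device you propose to resolve it does not work in $d=2,3$.
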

\begin{proof}
We show first the  $\liminf$ inequality: We shall show that for all $   v \in H^1 _0 (\varOmega)$ and all sequences 
$( v_\ell )$
 such that $\widehat  v_\ell  \rightarrow    v$ in $L^2(\varOmega), $  where $\widehat  v_\ell: =I_{\S _{h(\ell)} } v_\ell \, , $ it holds that
 \begin{align}\label{liminf_Th}
 \E (v) \leq \liminf\limits_{\ell\rightarrow \infty } \E _\ell ( v_\ell ). 
\end{align}
 We assume there is a subsequence, still denoted by $v_\ell $, 
 such that $ \mathcal{E}_\ell (v_\ell) \leq C $ uniformly in $ \ell $, otherwise $ \mathcal{E}(v) \leq \liminf_{\ell \rightarrow \infty}\mathcal{E}_\ell (v_\ell) = + \infty  .$
 The above stability result, Proposition \ref{Prop:EquicoercivityofE(2)},    implies that $ \|\widehat v_\ell \|_{H^1(\varOmega)} $ are uniformly bounded.  Therefore, up to subsequences,  there exists a $\tilde v\in H^1(\varOmega), $ such that $\widehat v_\ell \rightharpoonup \tilde v $ in $ H^1 $ and $ \widehat v_\ell \rightarrow \tilde v  $ in $ L^2 $, thus $ \widehat v_\ell \rightharpoonup v $ in $ H^1 .$  
 Then we have  
 $ \nabla  \widehat v_\ell    \rightharpoonup \nabla   v$ in $L^2(\varOmega) .$
The term $\int_\varOmega |\nabla \widehat v_\ell  |^2$ is convex which
 implies weak lower semicontinuity \cite{dacorogna2007direct}: 
 $$\liminf\limits_{\ell\rightarrow \infty}
\int_\varOmega |\nabla \widehat v_\ell |^2 \ge \int_\varOmega |\nabla     v|^2.$$
Since  $ \widehat v_\ell  \rightarrow    v$ in $L^2(\varOmega)$  we show below that    
$$\lim \limits_{\ell\rightarrow \infty}
\int_\varOmega  I_{\S _{h(\ell)} } ( v_\ell  f )\dx  = \int_ \varOmega     v f \dx . $$
In fact, we clearly have 
$$\lim \limits_{\ell\rightarrow \infty}
\int_\varOmega  I_{\S _{h(\ell)} } ( v_\ell)\,   f \dx  = \int_ \varOmega     v f \dx , $$
and thus it remains to show
\begin{equation}\label{Th_eq1}
	\lim \limits_{\ell\rightarrow \infty} \Big [
\int_\varOmega  I_{\S _{h(\ell)} } ( v_\ell \,   f ) \dx  - \int_ \varOmega     I_{\S _{h(\ell)} } ( v_\ell)\,   f \dx  \Big ]=0.
\end{equation}  
We shall need some more notation: Let $\{ \Phi _z\} _ {z\in \mathcal {Z}_\ell }, $ be  the Lagrangian basis of $\S _{h(\ell)},$
where $\mathcal {Z}_\ell  $ denotes the set of degrees of freedom. The support of each $\Phi _z$ is denoted by $K_z.$
As is typical,  $K_z$ contains at most a specified number of elements. Thus 
$$ \sup _{ z \in \mathcal {Z}_\ell } \, | K_z|  \to  0,  \qquad \ell \to \infty\, . $$ Using this notation, we observe
\begin{equation*}
	\begin{split}
		\Big  |
\int_\varOmega & I_{\S _{h(\ell)} } ( v_\ell \,   f ) \dx  - \int_ \varOmega     I_{\S _{h(\ell)} } ( v_\ell)\,   f \dx  \Big |\\
& \leq 	\Big  |
\int_\varOmega \sum _{ z\in  \mathcal {Z}_\ell }   v_\ell (z) \,   f (z) \, \Phi _z (x)   \dx  - \int_\varOmega  \sum _{ z\in  \mathcal {Z}_\ell }   v_\ell (z) \,   f (x) \, \Phi _z (x)   \dx    \Big |\\
& \leq 	\sum _{ z\in  \mathcal {Z}_\ell }  
\int_\varOmega  \big |  f (z) - f(x) \big | \,  \Big  |   v_\ell (z) \,   \Phi _z (x)   \Big |     \, \dx \\
	& \leq 	\sum _{ z\in  \mathcal {Z}_\ell }  
\sup _ {x\in K_z}  \big |  f (z) - f(x) \big | \, \int_{K_z}  \Big  |   v_\ell (z) \,   \Phi _z (x)   \Big |     \, \dx \\
& \leq  \sup _{ z \in \mathcal {Z}_\ell }  \sup _ {x\in K_z}  \big |  f (z) - f(x) \big | \,  	\sum _{ z\in  \mathcal {Z}_\ell }  
 \int_{K_z}  \Big  |   v_\ell (z) \,   \Phi _z (x)   \Big |     \, \dx \, .\\
	\end{split}
\end{equation*}
We introduce the following notation: $\overline h_z = \max _{ K\subset K_z} h_K ,$  $\underline  h_z = \min _{ K\subset K_z} h_K . $
By our assumptions on the finite element spaces there holds for a $\beta >0,$ constant independent  of $h$ (and thus of $\ell$) that 
$$ \overline h_z \leq \beta \underline  h_z. $$
We have now using standard homogeneity arguments, see \cite[Section 4.5]{BrennerSFE}, 
\begin{equation*}
	\begin{split}
   	\sum _{ z\in  \mathcal {Z}_\ell }   
 \int_{K_z}  \Big  |   v_\ell (z) \,&   \Phi _z (x)   \Big |      \, \dx \,
		  \leq C 	\sum _{ z\in  \mathcal {Z}_\ell }   
  \overline h_z   ^ {\ d }   \big  |   v_\ell (z) \,     \big |  \\
   &\leq C 	\Big ( \sum _{ z\in  \mathcal {Z}_\ell }   
  \overline h_z   ^ {\ d }   \big  |   v_\ell (z) \,     \big |   ^2   \Big ) ^{1/2} \,
  \Big ( \sum _{ z\in  \mathcal {Z}_\ell }   
  \overline h_z   ^ {\ d }      \Big ) ^{1/2}\\
   &\leq C 	\Big ( \sum _{ z\in  \mathcal {Z}_\ell }   
  \overline h_z   ^ {\ d }   \big  \|   I_{\S _{h(\ell)} }  v_\ell   \,     \big \|_ {L^\infty ( K_z)}  ^2   \Big ) ^{1/2} \,
  \big | \varOmega        \big | ^{1/2}\\
  &\leq C 	\Big ( \sum _{ z\in  \mathcal {Z}_\ell }   
  \overline h_z   ^ {\ d }  \underline h_z   ^ {\ - d }  \big  \|   I_{\S _{h(\ell)} }  v_\ell   \,     \big \|_ {L^2 ( K_z)}  ^2   \Big ) ^{1/2} \,
  \big | \varOmega        \big | ^{1/2}\\
  &\leq C \beta ^ d 	\Big ( \sum _{ z\in  \mathcal {Z}_\ell }   
   \big  \|   I_{\S _{h(\ell)} }  v_\ell   \,     \big \|_ {L^2 ( K_z)}  ^2   \Big ) ^{1/2} \,
  \big | \varOmega        \big | ^{1/2} \leq C \, \big  \|   I_{\S _{h(\ell)} }  v_\ell   \,     \big \|_ {L^2 ( \varOmega )}  \big | \varOmega        \big | ^{1/2}\, . 
	\end{split}
\end{equation*}
We conclude therefore that 
\begin{equation*}
	\begin{split}
		\Big  |
\int_\varOmega & I_{\S _{h(\ell)} } ( v_\ell \,   f ) \dx  - \int_ \varOmega     I_{\S _{h(\ell)} } ( v_\ell)\,   f \dx  \Big |\\
& \leq  C\, \sup _{ z \in \mathcal {Z}_\ell }  \sup _ {x\in K_z}  \big |  f (z) - f(x) \big | \,  	\big  \|   I_{\S _{h(\ell)} }  v_\ell   \,     \big \|_ {L^2 ( \varOmega )} \, . 
	\end{split}
\end{equation*}
Since $\big  \|   I_{\S _{h(\ell)} }  v_\ell   \,     \big \|_ {L^2 ( \varOmega )} $ is bounded and $f$ is uniformly continuous, \eqref{Th_eq1} follows and thus \eqref{liminf_Th} is valid. 

Let $ w \in H^1_0 (\varOmega)  $ be arbitrary;  we will show the existence of a recovery sequence  $(w_\ell)$, such that  $ \mathcal{E}(w) = \lim_{\ell \rightarrow \infty} \mathcal{E}_\ell (w_\ell)  .$  
 For each $\delta >0$ we can select a smooth enough mollifier $w_\delta  \in   C^m_0 (\varOmega),$ $m>2, $ such that   
 \begin{equation}
\begin{split}
	\|&w -w_\delta\|_{H^1( \varOmega ) }  \lesssim \delta \, , \quad \text{and,}\\
 |& w_\delta   |_{  H^s(\varOmega)} \lesssim \frac{1}{\delta^{s-1}} |w|_{  H^1(\varOmega)} .
	\end{split}
\end{equation}
For $w_\delta, \tred{recalling} $ \eqref{w_ell_7_hSm}, there exists $ w_{\ell, \delta} \in V_\ell $ such that
$$\|w_{\ell, \delta }-w_\delta \|_{H^1(\varOmega)}  \leq \ \tilde  \beta _\ell \, \| w_\delta\| _{  H^s(\varOmega)}
\leq \ \tilde  \beta _\ell \frac 1 {\delta ^{s-1}}\, \| w \| _{  H^ 1(\varOmega)},\qquad 
 \text{and } \ \tilde \beta _\ell \, (w) \to 0,  \  \ \ell\to \infty\, .$$
 \tred{Next, we distinguish two cases regarding the regularity of the discrete neural network spaces: (i) $ V_{\ell} \subset H^2(\varOmega)$
 and (ii) elements of $ V_{\ell} $ which are only Lipschitz continuous. The second case corresponds to the choice of ReLU activation function. }\\

 \noindent
 \tred{\emph{Case 1:  $ V_{\ell} \subset H^2(\varOmega).$} } \\
  Upon noticing that $w_\delta $ has zero boundary trace, but $ w_{\ell, \delta }$ 
 has not,  we first observe that 
 \begin{equation} \label {smoothNN}
 	 \|\nabla \big ( w_{\ell, \delta }-I_{\tilde \S _{h(\ell)} }  w_{\ell, \delta } \big )\|
 \leq \ C  h(\ell)  \, | w_{\ell, \delta } | _{  H^2(\varOmega)} 
\leq \ \ C  h(\ell)  (1 + \tilde  \beta _\ell )\frac 1 {\delta  } \, \| w \| _{  H^ 1(\varOmega)},\qquad 
\end{equation}
 where $ \tilde \beta _\ell \, (w) \to 0,  \  \ \ell\to \infty\, .$ \tred{ Also, using the fact that  $ I_{\tilde \S _{h(\ell)} } w_{\ell, \delta } - I_{  \S _{h(\ell)}}  w_{\ell, \delta } $ is zero at all nodal points except those at the boundary, we first observe that for any element $K$ which has a face $e$  on $\partial \Omega$ we have,
 \begin{equation*}	
 \begin{split}
 	 \|\nabla \big ( I_{\tilde \S _{h(\ell)} } w_{\ell, \delta } -& I_{  \S _{h(\ell)}}  w_{\ell, \delta } \big )\|^2_{L^2(K)}  
 \leq \ C  \underline {h}_{K} ^{-2} \, |K|  | w_{\ell, \delta } | ^2_{  L^\infty (e)} \leq \ C  \underline {h}_{K} ^{-1} \, |e|  | w_{\ell, \delta } | ^2_{  L^\infty (e)}
\, ,\qquad 
 \end{split} 
\end{equation*} 
and therefore,
 \begin{equation*}	
 \begin{split}
 	 \|\nabla \big ( I_{\tilde \S _{h(\ell)} } w_{\ell, \delta } -& I_{  \S _{h(\ell)}}  w_{\ell, \delta } \big )\|
 \leq \ C |\partial \Omega| ^{1/2} \underline {h}_{E, \ell} ^ {-1/2} \, | w_{\ell, \delta } | _{  L^\infty (\partial \varOmega)} \\
& =\ C  \underline {h}_{E, \ell} ^ {-1/2} \, | w_{\ell, \delta } -w_\delta | _{  L^\infty (\partial \varOmega)} 
\leq \ \ C  \underline {h}_{E, \ell} ^ {-1/2}\,  \tilde  \beta ^{[2,0, \infty]} _\ell \frac 1 {\delta^{d +1}  } \, \| w \| _{  H^ 1(\varOmega)}\, .\qquad 
 \end{split} 
\end{equation*} }
 Choosing $\delta $ appropriately, e.g., $\delta = \max \{ \tilde \beta _\ell ^{1/2}, h(\ell) ^{1/2}, \big (   \underline {h}_{E, \ell} ^ {-1/2}\,  (\tilde  \beta ^{[2, 0, \infty]} _\ell) ^{1-\epsilon} \big ) ^ {1/d+1} \ \},$  as function of $\tilde \beta _\ell$  and $ h(\ell) $ we can ensure that $w_\ell =w_{\ell, \delta}$ 
satisfies, 
\begin{equation}\label{prooflimsupeq2}
 \mathcal {E} _\ell (w_\ell )  \rightarrow \mathcal {E} (w ) \, , \  \ \ell\to \infty\, .
\end{equation}\\

 \noindent
 \tred{\emph{Case 2:  ReLU activation function.}}\\
 \tred{If  $\sigma (x) =ReLU(x) = \max \{ 0, x\}$ then elements of $ V_{\ell} $ are only Lipschitz continuous. }
 In this case, one needs to modify the argument based on the first inequality of 
\eqref{smoothNN}. 
To this end, we introduce first  a Cl\'ement type interplant, see e.g.,  \cite{Clement_1975}, \cite{Carstensen_Clement_1999},\cite{Verfurth_Clem_Inter_1999}, by 
\begin{equation}
I_{ C _{h(\ell)}} g =	\sum _{ z\in \tilde  {\mathcal {Z}} _\ell }   \overline     g (z) \, \Phi _z (x)   \dx, \qquad \overline      g (z) = \frac 1 {|K_z|} \int _{K_z} g(y) \d y\, ,
\end{equation}
where as before, $K_z$ denotes the support of each $\Phi _z$ and $\tilde   {\mathcal {Z}}_\ell $ denotes the set of degrees of freedom of $\tilde \S _{h(\ell)} \, .$ It is well known that the regularity required by this  interpolation operator 
 is just $g \in L^ 1 (\varOmega)$  and that it satisfies stability and error estimates under minimal conditions.   

Employing  $I_{ C _{h(\ell)}} $ we  modify the argument based on the first inequality of 
\eqref{smoothNN} as follows 
 \begin{equation} \label {non_smoothNN_1}
 \begin{split}
 	 \|\nabla \big ( w_{\ell, \delta }-I_{\tilde \S _{h(\ell)} }  w_{\ell, \delta } \big )\| 
 	& \leq  \|\nabla \big ( w_{\delta }-I_{\tilde \S _{h(\ell)} }  w_{ \delta } \big )\| \\
 	&\quad  +  \|\nabla \big ( [ w_{\ell, \delta } - w_{ \delta } ] -I_{\tilde \S _{h(\ell)} }   [ w_{\ell, \delta } - w_{ \delta } ] \big )\|\\
 	& \leq  \|\nabla \big ( w_{\delta }-I_{\tilde \S _{h(\ell)} }  w_{ \delta } \big )\| \\
 	&\quad  +  \|\nabla \big (I_{ C _{h(\ell)}} [ w_{\ell, \delta } - w_{ \delta } ] -I_{\tilde \S _{h(\ell)} }   [ w_{\ell, \delta } - w_{ \delta } ] \big )\| \\
 	&\quad  +  \|\nabla \big ( I_{ C _{h(\ell)}} [ w_{\ell, \delta } - w_{ \delta }    ] \big )\| \, .
 		 \end{split}
%
%
%
%
\end{equation}
For the first term, we have, 
 \begin{equation} \label {non_smoothNN_2}
 	 \|\nabla \big ( w_{  \delta }-I_{\tilde \S _{h(\ell)} }  w_{ \delta } \big )\|
 \leq \ C  h(\ell)  \, | w_{  \delta } | _{  H^2(\varOmega)} 
\leq \ \ C    \frac { h(\ell) } {\delta  } \, \| w \| _{  H^ 1(\varOmega)}\, .\qquad 
\end{equation}
The stability of Cl\'ement  interplant implies, 
 \begin{equation} \label {non_smoothNN_3}
 	\|\nabla \big ( I_{ C _{h(\ell)}} [ w_{\ell, \delta } - w_{ \delta }    ] \big )\| 
 \leq \ C  \|w_{\ell, \delta }-w_\delta \|_{H^1(\varOmega)}
 \leq C \, \tilde  \beta _\ell \, \| w_\delta\| _{  H^s(\varOmega)}
\leq C \, \tilde  \beta _\ell \frac 1 {\delta ^{s-1}}\, \| w \| _{  H^ 1(\varOmega)}\, .
\end{equation}
It remains to estimate the second term on the right hand side of \eqref{non_smoothNN_1}. To this end let $K$ a fixed element of the triangulation. We then have \tred{for $g= w_{\ell, \delta } - w_{ \delta },$}
\begin{equation*}
	\begin{split}
	\|\nabla \big (I_{ C _{h(\ell)}} \, g - &I_{\tilde \S _{h(\ell)} }   \, g \big )\| _{L^2(K)}
 =  	\Big   \| 
 \sum _{ z\in  \tilde {\mathcal {Z}} _\ell }  [ \overline g(z)  - g(z)  ] \nabla\Phi _z \Big  \| _{L^2(K)}   \\
& \leq 	\sum _{ z\in  \tilde {\mathcal {Z}} _\ell }  
  \big |   \overline g(z)  - g(z)  \big | \,  \big  \| \nabla\Phi _z \big  \| _{L^2(K)}  \\
  &  \leq  	\sum _{ z\in  \tilde {\mathcal {Z}} _\ell }  
\frac 1 {|K_z|} \int _{K_z}   \big |     g(y)  - g(z)  \big | \, \d y \,  \big  \| \nabla\Phi _z \big  \| _{L^2(K)}  \\
&  \leq  C	\sum _{ z\in  \tilde {\mathcal {Z}} _\ell }  
\frac 1 {|K_z|} \int _{K_z}  \,  \overline h_z \big |   \nabla   g  \big |_{\tred{L^\infty(K_z)}} \, \d y \, h_K ^{-1}\,  \big  \| \Phi _z \big  \| _{L^2(K)}  \\
&  \leq  C	  \, \max _{z :\ K \subset K_z }\, \overline h_z \big |   \nabla   g  \big |_{\infty, K_z} \, \, \underline  h_z ^{-1}\,  \big  | K \big  | ^{1/2}  \leq  C	 \beta \,    \big |   \nabla   
g  \big |_{\tred{L^\infty(\varOmega)}} \, \,   \big  | K \big  | ^{1/2}\, . \\
	\end{split}
\end{equation*}
Where we have used the fact $ \overline h_z \leq \beta \underline  h_z $
and that given the family of triangulations,  for each  $K$ the number of $K_z$ such that $K \subset K_z$ is finite and fixed.
We conclude therefore that 
 \begin{equation} \label {non_smoothNN_4}
 \begin{split}
 	\|\nabla \big (I_{ C _{h(\ell)}} [ w_{\ell, \delta } - w_{ \delta } ] -I_{\tilde \S _{h(\ell)} }   [ w_{\ell, \delta } - w_{ \delta } ] \big )\| 	& \leq C\, 
	\big |w_{\ell, \delta } - w_{ \delta } \big |_{\tred{W^{1, \infty}(\varOmega)}} \,  \big  | \varOmega  \big  | ^{1/2}\, .
 	 		 \end{split}
\end{equation}
This bound completes the proof, as for ReLU networks it is expected that 
\eqref{w_ell_7_hSm} will hold for $W^{s, p}= W^{1, \infty}, $
see e.g. \cite{guhring2020error} and its references. \\

To conclude the proof, let $u\in H^1_0 (\varOmega)  $ be the unique solution of \eqref{mm} and consider  the sequence of the discrete minimisers $(u_\ell)\, .$ Then, 
$$ \mathcal{E}_{ \ell} (u_\ell) \leq \mathcal{E}_{ \ell} (v_\ell), \qquad \text{for all } v_\ell \in V_\ell\, . $$
Specifically,  
 $ \mathcal{E}_{ \ell}(u_\ell) \leq \mathcal{E}_{ \ell} (\tilde u_\ell),  $ 
where $\tilde u_\ell $ is the recovery sequence constructed above corresponding to $w=u.$  
Since $ \mathcal {E} _\ell (\tilde u_\ell)  \rightarrow \mathcal {E} (u), $ the sequence $( \mathcal {E} _\ell (\tilde u_\ell)  )$ is bounded 
and therefore,   the discrete energies are uniformly  bounded. Then the stability result  
Proposition \ref{Prop:EquicoercivityofE(2)}, implies that
\begin{align}
 \norm{ I_{\S _{h(\ell)} }  u_\ell }_{H^1(\varOmega)}   < C,  
\end{align} 
uniformly. We apply  the  Rellich-Kondrachov theorem, \cite{Evans},  and similar arguments as in the proof of  $\liminf$ inequality to conclude the following:  There exists $ \tilde   u\in H^1 (\varOmega ) $ 
such that $ I_{\S _{h(\ell)}} u_\ell  \rightarrow   \tilde   u$ in 
$L^2(\varOmega) $ up to a subsequence not relabeled here. Furthermore, by the trace inequality, the fact that $ I_{\S _{h(\ell)}} u_\ell$ have zero trace and have uniformly bounded $H^1$ norms we conclude that $\tilde u $ has zero trace. 
Next we show that   $ \tilde   u = u$ where $u $ is the global minimiser of $ \E. $ 

Let $w \in H^1_0 (\varOmega) $, and   $ w_\ell \in V_\ell $ be its recovery sequence constructed above.
Therefore,    the $\liminf$ inequality 
and the fact that $ u_\ell $ are   minimisers of the $\E _{\ell},$ imply that
\begin{align}
 \E (  \tilde   u ) \le  \liminf_{\ell \rightarrow \infty }  \E _{ \ell} ( u_\ell ) 
  \le  \limsup_{\ell \rightarrow \infty }  \E _{ \ell} ( u_\ell ) 
\le  \limsup_{\ell \rightarrow \infty }  \E _{ \ell} ( w_\ell ) 
= \E  (  w ). 
\end{align}
Since  $ w \in H^1_0 (\varOmega) $ is arbitrary,  $ \tilde  u$ is a  minimiser of $ \E ,$ and since $u $ is the unique global minimiser of $ \E  $ on  $H^1_0 (\varOmega) $ we have that $\tilde  u=u $. Since all subsequences have the same limit, the entire sequence converges $ I_{\S _{h(\ell)} } u_\ell \to u.$

\end{proof}

 \begin{remark}\label {adapt_conv}
 The final argument of the proof can be carried out by using only a recovery sequence for $w=u,$ $u$ being the exact solution. If $\{ \tilde u_\ell\}$ is such a  sequence, we will have 
\begin{align*}
 \E (  \tilde   u ) \le  \liminf_{\ell \rightarrow \infty }  \E _{ \ell} ( u_\ell ) 
  \le  \limsup_{\ell \rightarrow \infty }  \E _{ \ell} ( u_\ell ) 
\le  \limsup_{\ell \rightarrow \infty }  \E _{ \ell} (\tilde u_\ell ) 
= \E  (  u ). 
\end{align*}
Since    $u $ is the unique global minimiser of $ \E  $  we have that $\tilde  u=u $. \\
  \end{remark}	
 
 \begin{remark}[Convergence for General Elliptic Problems]\label {gen_ell_conv}
 The convergence results generalise in a straightforward manner in the case of general elliptic operators with 
 constant coefficients, \eqref{ellipticPDE}. In the case of variable $ a_{ij},$ $c,$ one has to employ appropriate quadrature rules
 as discussed in Section \ref{gen_ell}. The convergence proof is based on similar arguments as above, however, 
 a series of technical estimates and specific approximation properties of the quadrature need to be used. 
 In fact, one has to establish for both liminf and limsup inequalities  convergence of the form
 \begin{equation*}  \begin{split}
 &	\lim _{\ell \to \infty }  | B (I_{\S _{h(\ell)}} (g_\ell) I_{\S _{h(\ell)}} ( g_\ell) ) - B_{h(\ell)} (I_{\S _{h(\ell)}}( g_\ell),I_{\S _{h(\ell)}}( g_\ell)  ) | =0
 \, .
 \end{split}
\end{equation*}
 To this end, similar estimates as in  \cite[Theorem 29.1] {CiarletFE} along with homogeneity arguments need to be employed. 
 \end{remark}

\section{Numerical Results}\label{NR}
In the sequel we compare the aforementioned training methods, Section~\ref{training_approaches},  to approximate  the minimiser 
of equation~\eqref{en-func}. 
We have chosen the right hand side 
of \eqref{ivp-AC} such that the energy  minimizer has the form
\begin{align}\label{eq:uexact}
u_e(x_1, x_2) = \sin(2\pi x_1)\sin(2\pi x_2), \text{ } (x_1, x_2) \in \varOmega = [0,1]^2. 
\end{align}
We seek to optimise the parameters of the Neural Network $u_\theta$
employing Monte-Carlo, quadrature  and finite elements training.
We compare their accuracy, their computational cost and their behaviour 
in the optimisation process of the network parameters. After testing various optimisers and learning rate schedulers we have chosen the \textbf{Adam} optimizer with cyclic 
learning rate policy (CLR), ranging the learning rate between $10^{-3}$ and $10^{-5}$, as it has provided a more efficient  training. 
The Neural Network is developed through the framework provided by PyTorch,
\cite{paszke2019pytorch}, with parameters of float32 precision executing the code in cpu.
The applied quadrature rules and the finite element spaces are based  on triangulations of the domain as in Fig.~\ref{fig:mesh}, 
where the unit square is divided in $2 M^2$ triangles. In the sequel 
we will perform simulation with $M=$ 20, 40, 60, 80, 100, 120. 
For details for the applied quadrature rules see, e.g., \cite{StrangFix,quadraturesSite}.


\begin{figure}
\centering
  \includegraphics[width=0.35\linewidth]{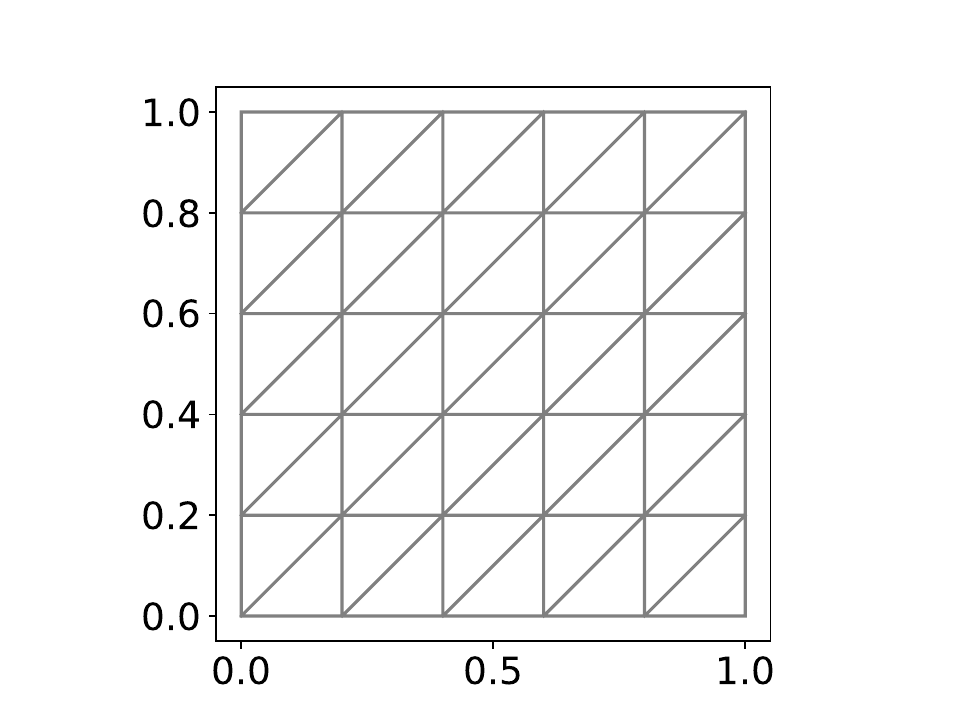}
  \caption{Uniform mesh: The unit square is divided in $2 M^2$ triangles (cells), here $M=5$. In the simulation that follow 
  $M=$ 20, 40, 60, 80, 100, 120.} 
  \label{fig:mesh}
\end{figure}

\begin{figure}
\centering
  \includegraphics[width=0.8\linewidth]{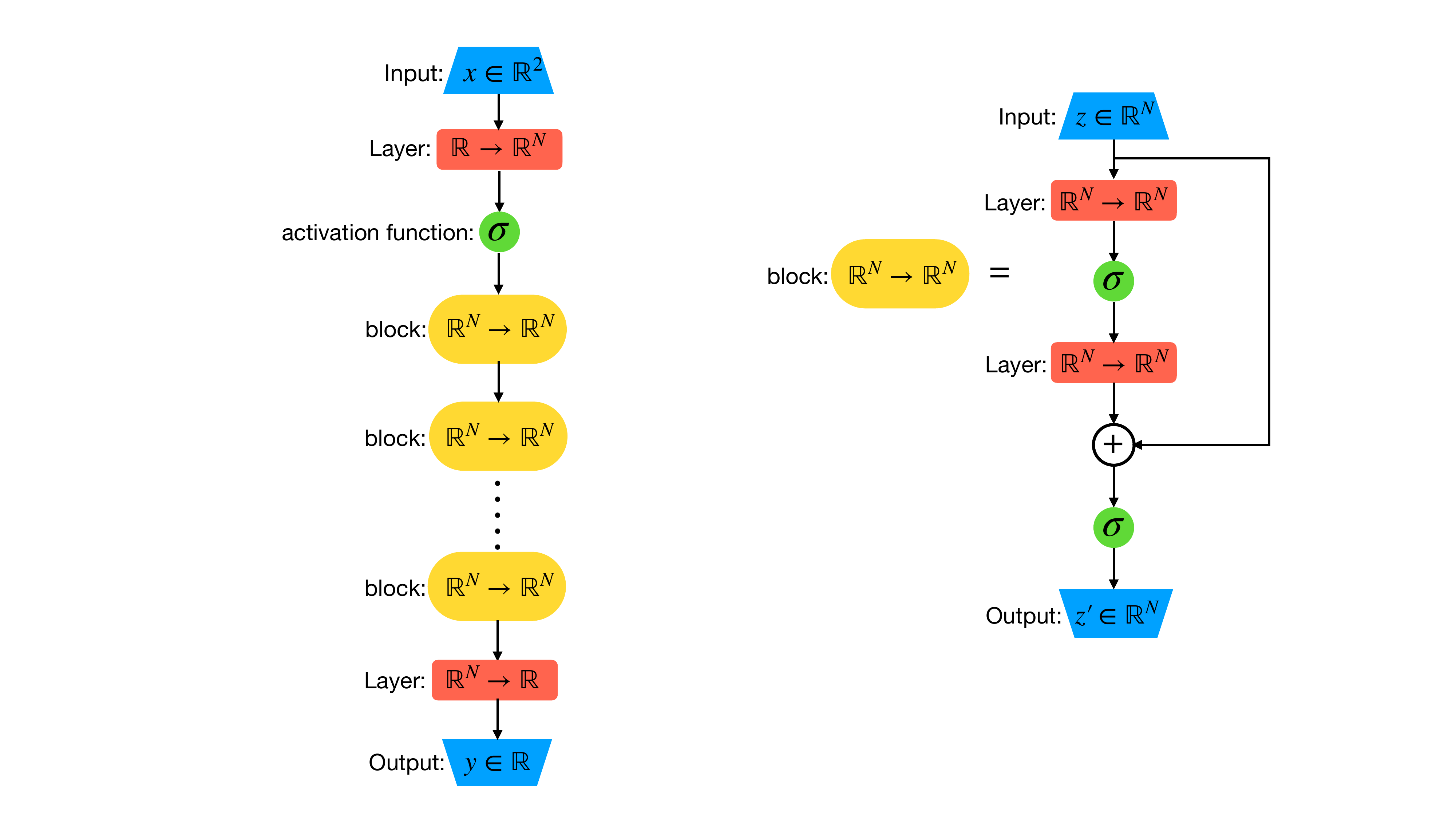}
  \caption{
  A Residual Neural Network.
  } 
  \label{fig:ResNet}
\end{figure}

For the Neural Network architecture we have employed a Residual Neural Network, see Fig.~\ref{fig:ResNet}, with $m$ number of blocks.  The $k$-th block is defined as 
 \begin{align}
 \text{bl}_k(z) =\sigma \left( \left( W_{2k} \sigma \left( W_{1k} z + b_{1k} \right) + b_{2k} \right) + z \right), 
 \end{align}
 where $z, b_{jk} \in \R^N$  and $W_{jk} \in \R^{N \times N}$, $j=1,2$, and $\sigma$ is the activation function $\tanh(.)$.
 The overall architecture can be described by the map $\mathcal{C}_L : \R ^2 \to \R$, specifically 
 \begin{equation}
 \mathcal{C}_L:= C_o    \circ \text{bl}_m \cdots  \circ \text{bl}_1 \circ\sigma \circ C_i,
\end{equation}
where $C_i$, $C_o$ are the input, output layers respectively with $C_i : \R^2 \rightarrow \R^N$ and $C_o: \R^N \rightarrow \R$. 
 Varying the number of blocks from $1$ to $4$ and fixing $N=64$ we optimize the network parameters for each 
training  method. \tred{This architecture represents a slight modification of the generic design outlined in Section 1.2. Our computational results showed that spaces based on Residual networks performed similarly and, in some cases, exhibited better behavior. As mentioned in Section 1.2, our convergence results are not dependent on the particular neural network architecture selected for the discrete spaces.}
%
%
%

\subsection*{Training through Monte-Carlo/Collocation}
This is the most straightforward  and widely used approach.
In every iteration $2M^2$ and $4M$   random points\footnote{The formula of $2M^2 + 4M$  total points is adopted for comparison with the next methods.}
 are generated for the interior and the boundary of $\varOmega$
respectively. From these $2M^2$ points the loss function is computed as in equation~\eqref{prob_E} and  the $4 M$ points impose the 
boundary conditions weakly by adding to the loss function the following term
\begin{equation}\label{bcs_penalty}
 \frac c N  \sum _{i=1 } ^N     | u_\theta(x_i )| ^2 
\end{equation}
where $N=4 M$, $x_i\in \partial \varOmega$ are the corresponding  random points at the boundary and 
$c$ is the penalty parameter imposing weakly the Dirichlet boundary conditions. In our simulations $c=40$ as it provides better 
approximations results.
In Figure~\ref{fig_errors_random} the 
$L^2$ error between $u_\theta$ and $u_e$ of equation~\ref{eq:uexact}, is illustrated for $M=20, 40, 60, 80, 100, 120$
varying the blocks number of the Residual Neural Network from $1$ to $4$. 
We start the training procedure with $M=20$  optimizing the parameters for 40000 epochs.
The optimized parameters are the input for the next step with $M=40$, re-optimizing  the network parameters for $20000$ epochs. 
This initialization from the previous (smaller) number of collocation points is repeated as $M$ increases and the parameters are retrained for 20000 epochs. 
The $L^2$-error  is of the order of $10^{-2}$ and for $M\ge 40$ does not reduce significantly
 for the four compared residual networks, Fig.~\ref{fig_errors_random}. The values of the loss functions approach each other
for $M \ge 80$ while the execution time increases almost linearly with respect to the blocks number. 

\begin{figure}
\centering
  \includegraphics[width=1\linewidth]{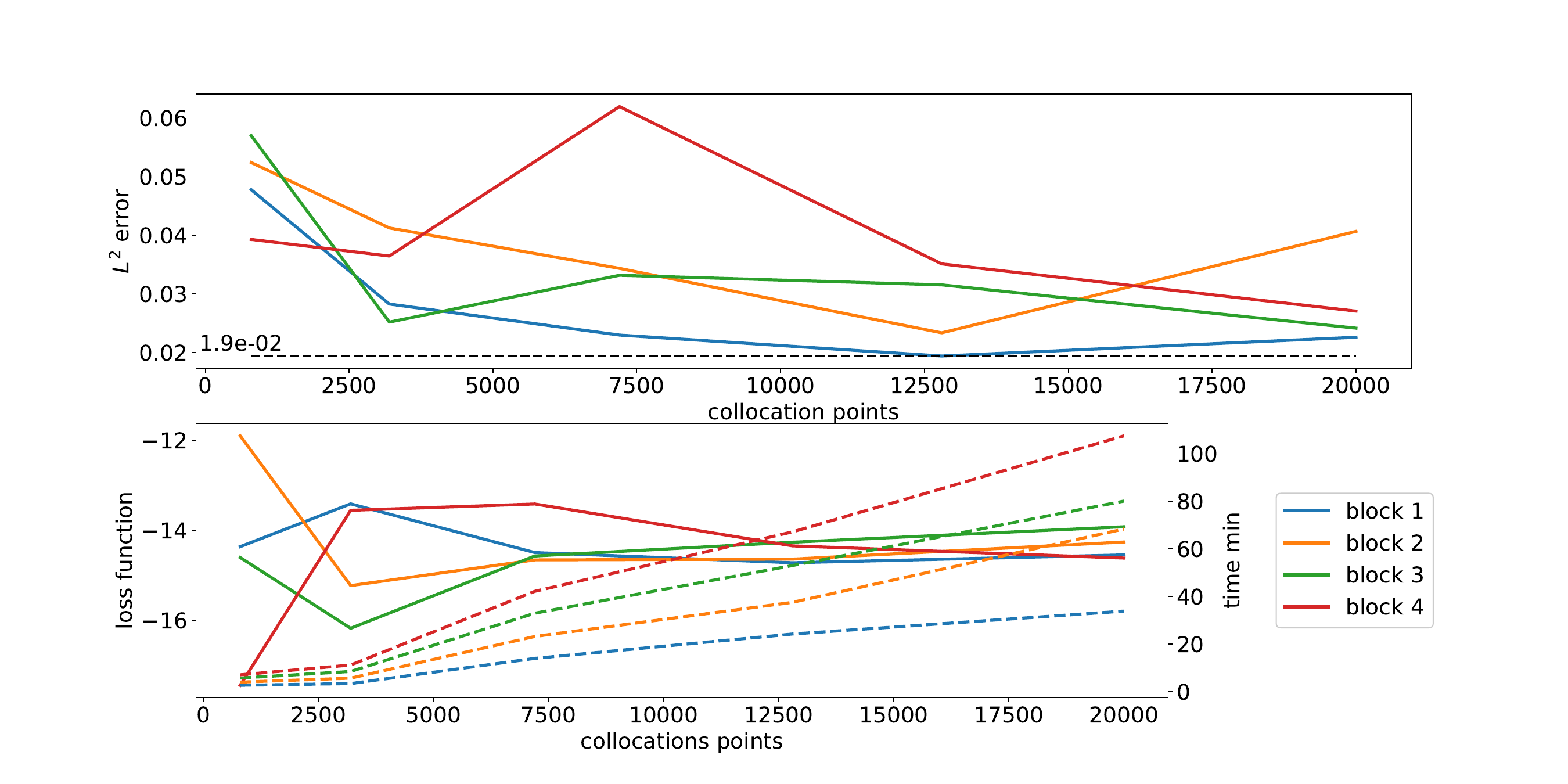}
  \caption 
{\emph{Monte-Carlo Collocation.\/} Energy minimization through collocation points varying collocation points and  the blocks number of the Residual Network architecture.
   Top image:  For a given number of collocation points   $\norm{u_\theta - u_e}_{L^2(\varOmega)}$ is computed
 where $u_\theta, u_e$  denote the discrete, exact minimizers respectively. The minimum error $1.9 \cdot 10^{-2}$ is achieved 
for  $1$ block and $M=80$, i.e. 13120 collocation points.
 Bottom image: Solid curves illustrate loss function values after the final epoch
  iteration (left vertical axis) and dashed curves the total execution time (right vertical axis).
  } 
  \label{fig_errors_random}
\end{figure}

\subsection*{Training through quadrature/collocation}
Next we examine the training through quadrature of first degree, see Section 1.4.1. The number of integration rule  points  are $2M^2$ for the interior of $\varOmega$ and $4M$
for the boundary,  as in the Monte-Carlo method, 
but rearranged in way to integrate polynomial of first degree exactly.
Given that in this case there is an underlying mesh, the boundary conditions can be  imposed weakly
by including the term of eq.~\eqref{Nitsche_bcs} (Nitsche's method) with $\alpha=40$. Then, the $L^2$-error is of the  order of $\sim 10^{-3}$, Fig.~\ref{fig_errors_autograd}. Here 
without increasing the computational cost a better approximation is achieved. On the other hand, similarly to the previous method there is no significant gain by enriching
 the Residual Network with extra blocks while the $L^2-$error is non monotonic as more integration points are added.
 We noticed that lower accuracy is achieved when the boundary conditions are imposed from eq.~\eqref{bcs_penalty}  
where the error is accumulated at the boundary.

\subsection*{Training through Finite Elements}
In the third examined method a finite element space of piece-wise linear polynomials is employed. The boundary conditions are imposed weakly through \eqref{E_hFE_nitsche}, see the discussion in Section 1.5. 
Here the $L^2-$error decreases monotonically as more integration points are added, and lies between  $10^{-3}$ and $10^{-4}$,
reaching the minimum value of $6.8 \cdot 10^{-4}$ for $4$ blocks of the Residual Network, see red curve of Fig.~\ref{fig_errors_interpolant}.
Here as more blocks are added a better approximation is achieved  but is not improved for more than 4 blocks.
 It is remarkable the significantly
decreased computational time compared to the two previous methods.  The computational time is approximately 
4 times shorter.
This is a result of computing $\nabla I_h u_\theta$ instead of 
$\nabla u_\theta$, where the former is a straightforward computation. 
Note that in this case, the integrand is computed exactly and from standard error estimates  an error of order $O(h)$, $h$ being the mesh size, 
\tred{is resulting from} the interpolation of  $u_\theta$. 

\subsection*{Comparison of the methods and further computations}
In Fig.~\ref{fig:ue_and_diff} it is illustrated the graphs of $u_e$ and of the pointwise error $|u_\theta - u_e|$ squared,  for all the compared training methods.
For each method we have chosen the  architecture with the minimum $L^2$ error, see Figs~\ref{fig_errors_random}, \ref{fig_errors_autograd} 
and \ref{fig_errors_interpolant}. In this figure it is apparent that training with quadrature collocation  
improves the squared error of Monte Carlo collocation,  reducing it up to two orders of magnitude. For the training through finite elements
the error is reduced up to 3 orders of magnitude compared to Monte Carlo collocation. 
It seems that, within the framework of 32 bit computations we are testing, the suggested deep Ritz finite element method  provides quite accurate approximation of $u_e.$ 
Furthermore, training through finite elements results in a significant reduced computational cost, compare execution times from bottom pictures of 
 Figs~\ref{fig_errors_random}, \ref{fig_errors_autograd} 
and \ref{fig_errors_interpolant}.\\

It is natural to ask how one can improve the precision of the approximation when training through quadrature collocation is used by increasing the accuracy of the quadrature rule employed. 
 For that purpose, we examine  
the training procedure through quadrature  under numerical integration ranging from order $2$ to order $5$ fixing the blocks number to $4$, Fig.~\ref{fig:autograd_over_id}. 
We notice that 
a slightly better approximation of $u_e$ is achieved with, a minimum $L^2$ error of $8.9\cdot 10^{-4}$, compared  to the integration 
rule of order $1$, Fig.~\ref{fig_errors_autograd}. 
However, this error is still higher than the minimum $L^2-$error  for the training through finite elements,  i.e. $7.2 \cdot 10^{-4}$
in Fig.~\ref{fig_errors_interpolant}. \\

To this end, we perform computational experiments with  finite element spaces with  polynomial degree of order $2$. 
Therefore, we employ an integration rule of degree 2, Fig~\ref{fig:errors_interpolant2}. Here the $L^2-$ error is reduced even more 
from $7.4 \cdot10^{-4}$ to $5.8 \cdot10^{-4}$, compare Figs.~\ref{fig_errors_interpolant} and \ref{fig:errors_interpolant2}, 
showing a monotonic error decrease as more integration points are added, capturing a better approximations as more blocks
are added.\\

We want to stress that these results are preliminary and intended to demonstrate the potential of the proposed method. A detailed computational analysis of the method's behaviour and its comparison with other neural network based approaches is beyond the scope of this paper, as it will naturally vary depending on the specific nature of the PDE being approximated and the choice of the approximating method.

\begin{figure}
\centering
 \includegraphics[width=1\linewidth]{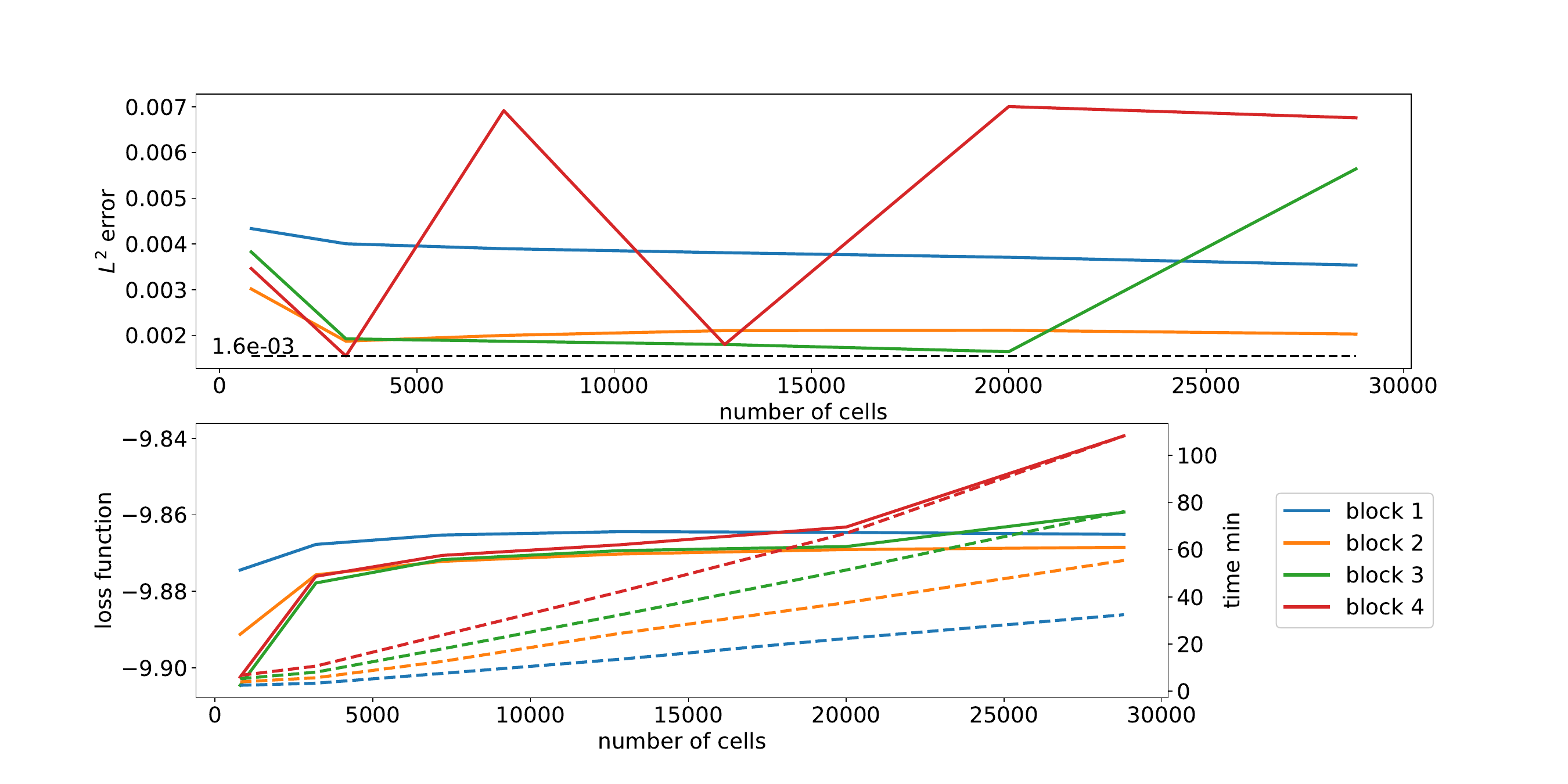}
    \caption{\emph{Quadrature collocation.\/} Energy minimization through quadrature rule with degree of precision 1.
    The number of cells and the blocks number of the Residual Network are varied.
   Top image:  For a given number of cells   $\norm{u_\theta - u_e}_{L^2(\varOmega)}$ is computed. 
   The minimum error $1.6 \cdot 10^{-3}$ is achieved for
 $3$ blocks and  $M=100$, i.e. 20000 triangles.
 Bottom image: Solid curves illustrate loss function values after the final epoch
  iteration (left vertical axis) and dashed curves the total execution time (right vertical axis).
  } 
  \label{fig_errors_autograd}
\end{figure}

\begin{figure}
\centering
 \includegraphics[width=1\linewidth]{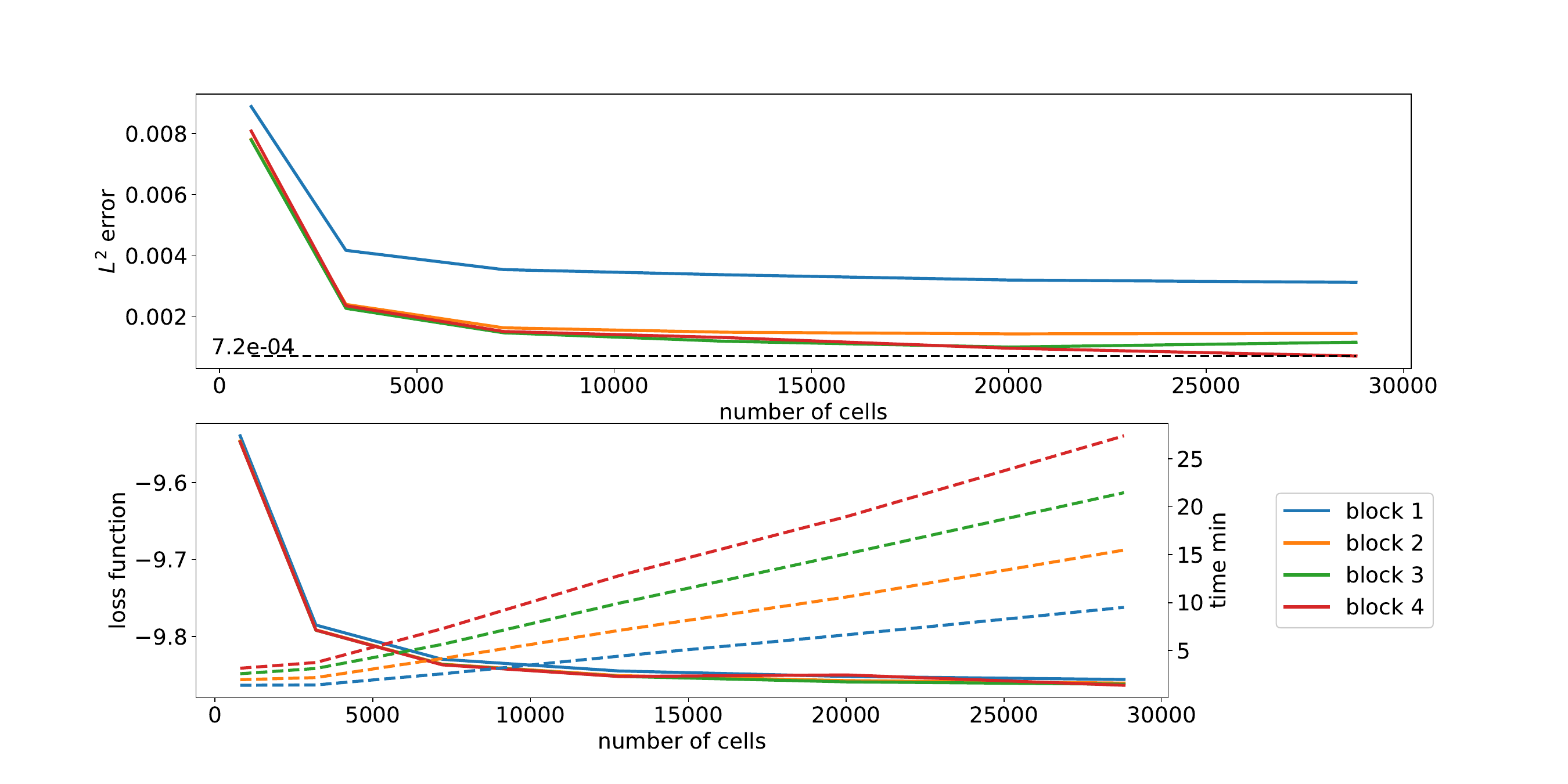}
\caption{\emph{Finite Element training.\/} Energy minimization through quadrature rule, with degree of precision 1, and finite element interpolation.
    The number of cells and the blocks number of the Residual Network are varied.
   Top image:  For a given number of cells   $\norm{u_\theta - u_e}_{L^2(\varOmega)}$ is computed. 
   The minimum error $7.2 \cdot 10^{-4}$ is achieved for
 $4$ blocks and  $M=120$, i.e. 28800 triangles.
 Bottom image: Solid curves illustrate loss function values after the final epoch
  iteration (left vertical axis) and dashed curves the total execution time (right vertical axis).
  }   \label{fig_errors_interpolant}
\end{figure}

%
%
\begin{figure}
\centering
 \includegraphics[width=0.35\linewidth]{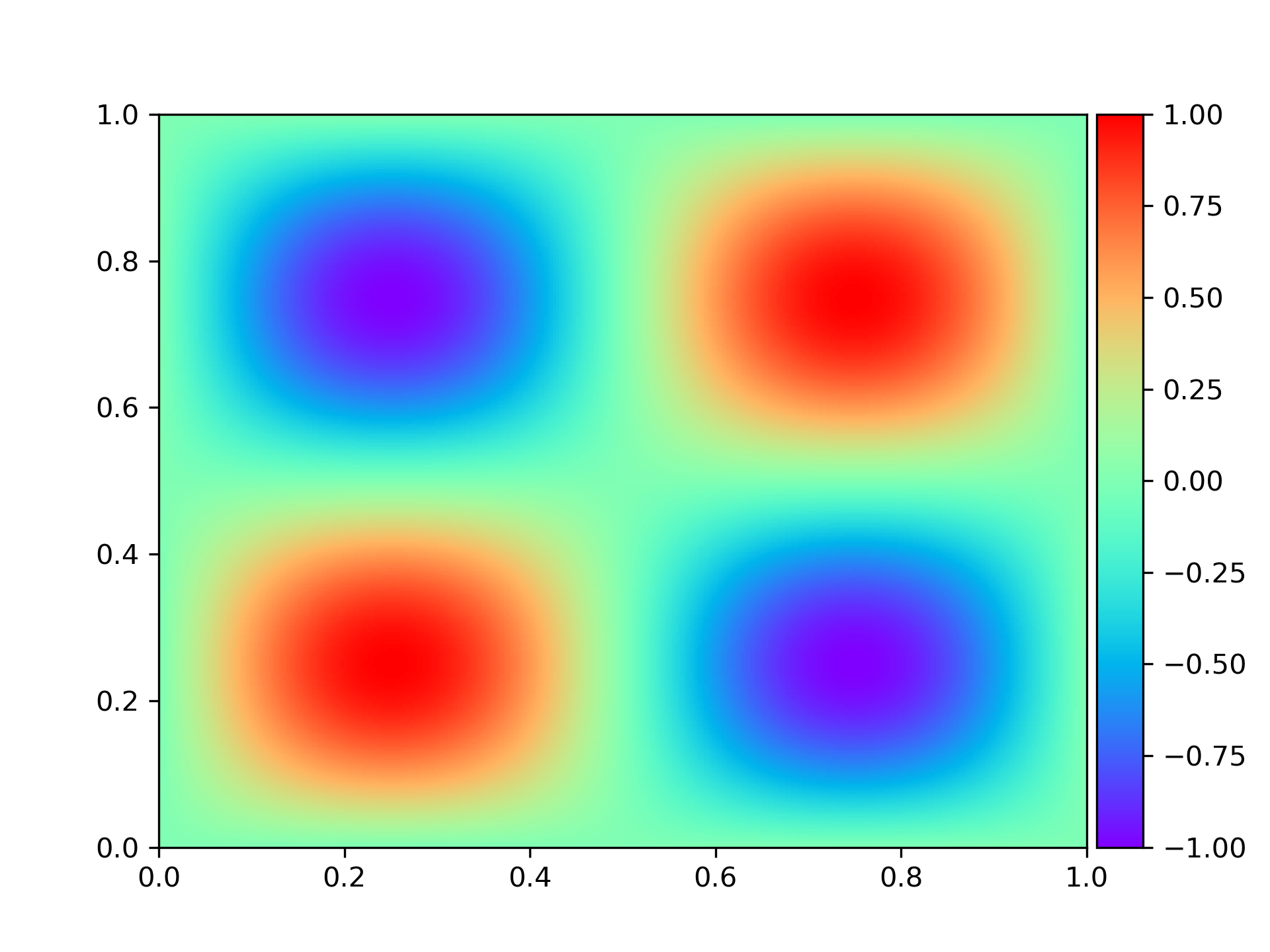}
  \includegraphics[width=0.35\linewidth]{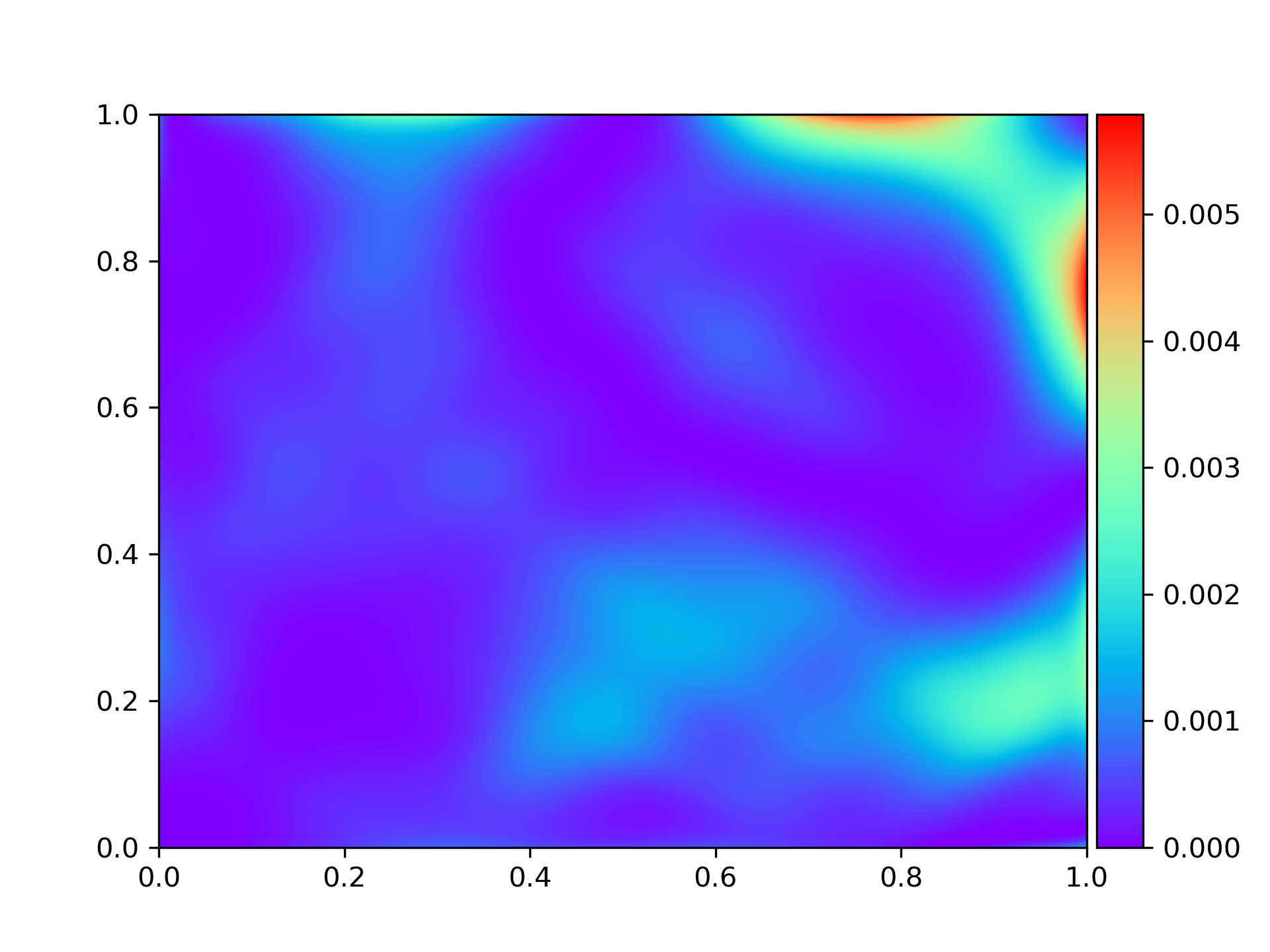}
      \includegraphics[width=0.35\linewidth]{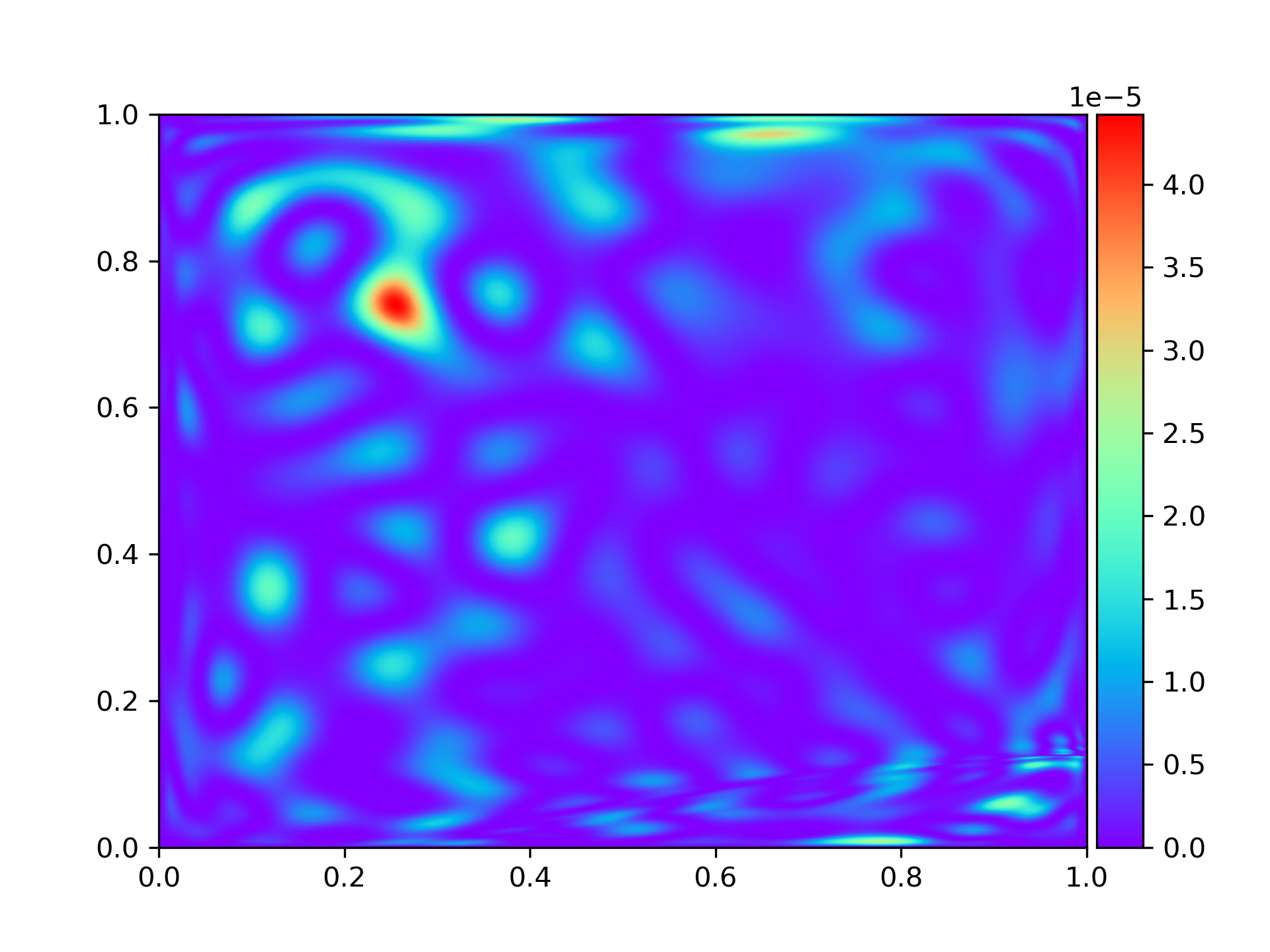}
      \includegraphics[width=0.35\linewidth]{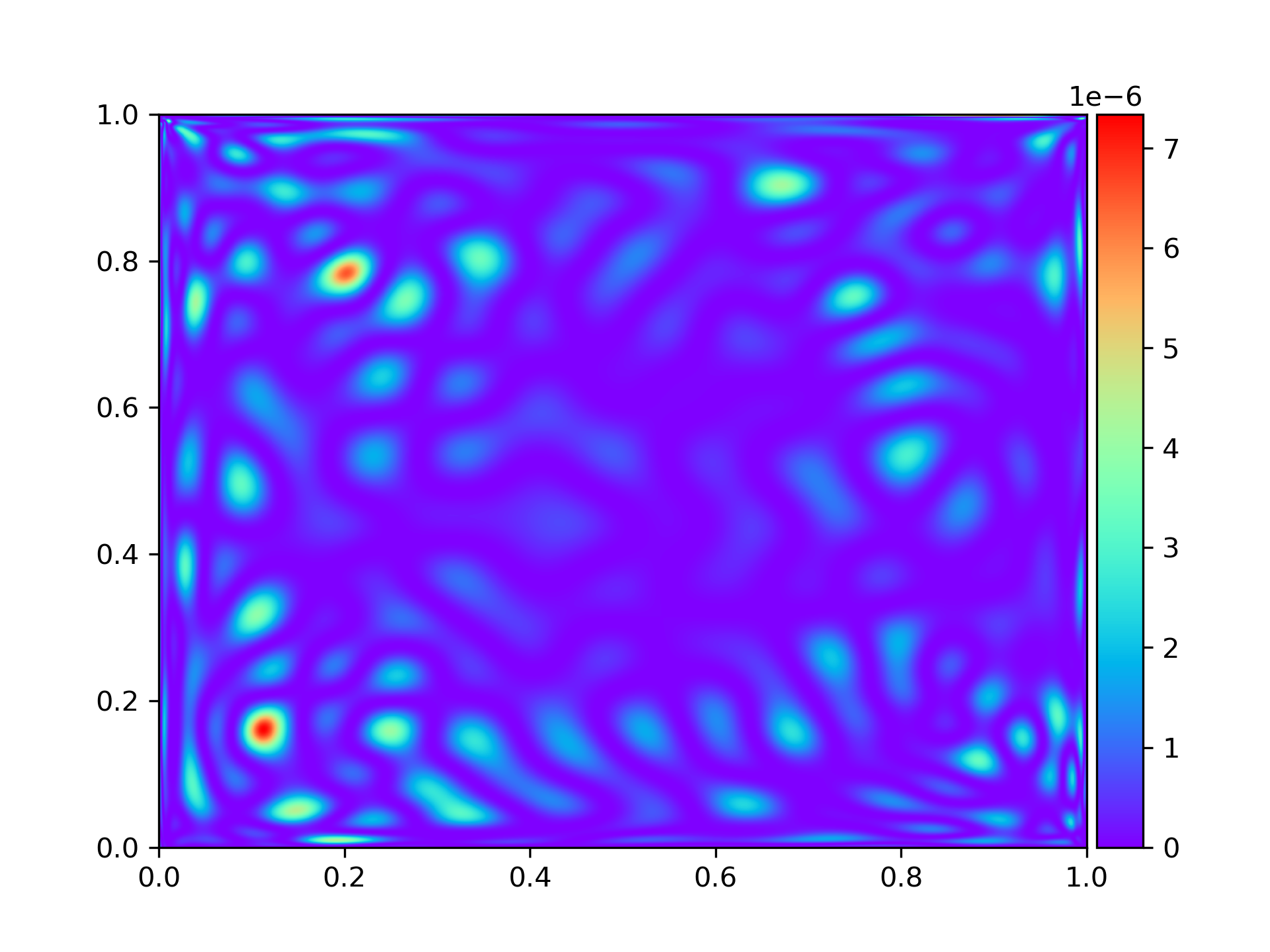}
  \caption{\emph{Pointwise errors.} Graph of the continuous minimizer $u_e$ (top left) and the pointwise squared difference with 
  its approximation, i.e. $|u_\theta - u_e|^2$. Top right: We have chosen $u_\theta$  with the best fitting for
   Monte-Carlo collocation  (blocks number$=1$, $N=80$ from Fig.~\ref{fig_errors_random}).  Bottom left: Best 
   $u_\theta$ from quadrature collocation  (blocks number$=3$, $M=100$, from Fig.~\ref{fig_errors_autograd}). 
   Bottom right: Best $u_\theta$ from training  with finite elements  (blocks number$=4$, $M=120$, from Fig.~\ref{fig_errors_interpolant}). 
  } 
  \label{fig:ue_and_diff}
\end{figure}

\begin{figure}
\centering
 \includegraphics[width=1\linewidth]{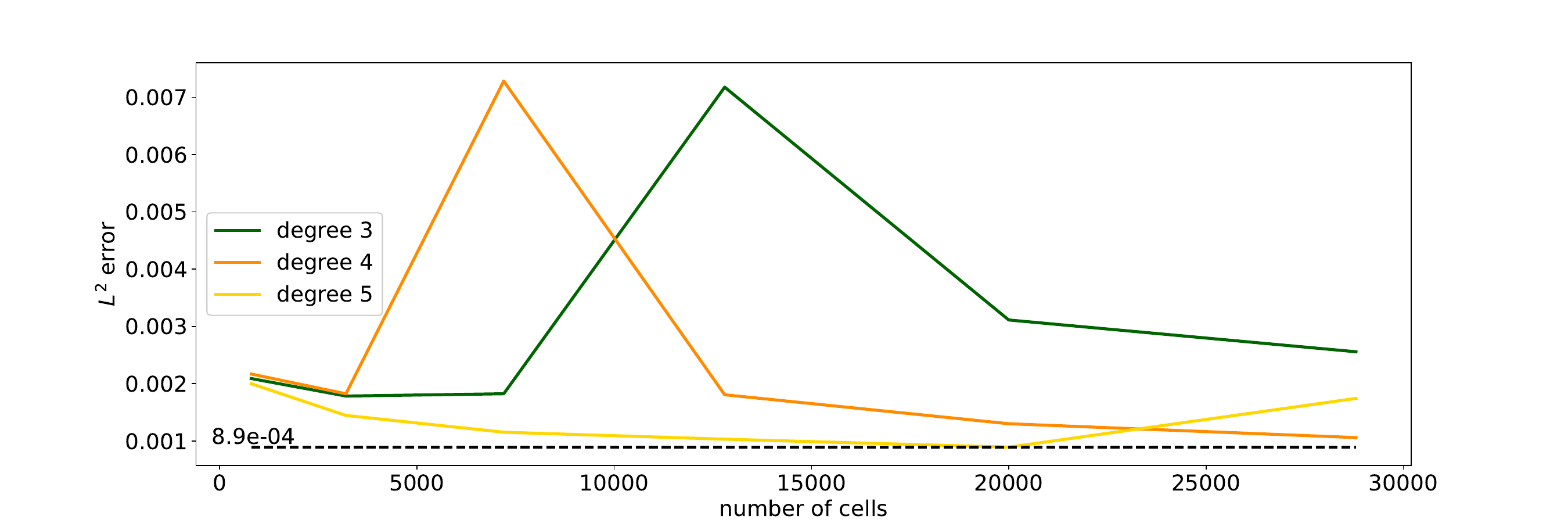}
  \caption{\emph{Training through quadrature -- higher order quadrature rules.\/} Fixing the number of blocks of the Residual Network to $4$, we perform training through quadrature
  varying the degree of precision from $3$ to $5$. The minimum value $\approx 8.9 \cdot 10^{-4}$ is attained 
  when the degree of precision is $5$  and $M=100$.}
  \label{fig:autograd_over_id}
\end{figure}

\vskip 0.5cm

\begin{figure}
\centering
 \includegraphics[width=1\linewidth]{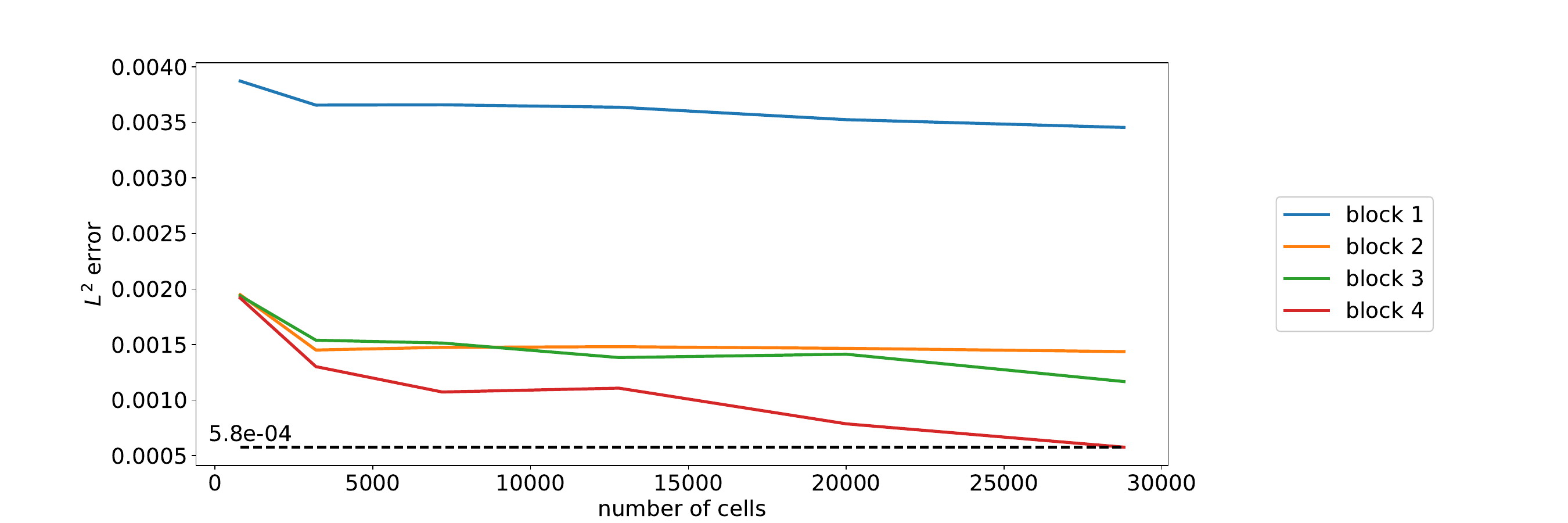}
  \caption{\emph{Finite Element training - $p=2.$\/} Energy minimisation through quadrature rule, with degree of precision 2, and finite element interpolation
  employing polynomials of second degree.
    The number of cells and the blocks number of the Residual Network are varied.
   Top image:  For a given number of cells   $\norm{u_\theta - u_e}_{L^2(\varOmega)}$ is computed. 
   The minimum error $5.8 \cdot 10^{-4}$ is achieved for
 $4$ blocks and  $M=120$, i.e. 28800 triangles.} 
  \label{fig:errors_interpolant2}
\end{figure}

\newpage

\bibliographystyle{amsplain}

\clearpage
\bibliography{references.bib}

\end{document}